\numberwithin{equation}{section}
\newtheorem{thm}{Theorem}[section]
\newtheorem{lem}[thm]{Lemma}
\newtheorem{defn}[thm]{Definition}
\newtheorem{clm}[thm]{Claim}
\def\Xint#1{\mathchoice
	{\XXint\displaystyle\textstyle{#1}}%
	{\XXint\textstyle\scriptstyle{#1}}%
	{\XXint\scriptstyle\scriptscriptstyle{#1}}%
	{\XXint\scriptscriptstyle\scriptscriptstyle{#1}}%
	\!\int}
\def\XXint#1#2#3{{\setbox0=\hbox{$#1{#2#3}{\int}$ }
		\vcenter{\hbox{$#2#3$ }}\kern-.6\wd0}}
\def\dashint{\Xint-}
\newcommand{\divg}{\textup{div}}
\newcommand{\phx}{\Phi(x)}
\newcommand{\phkx}{\Phi_k(x)}
\newcommand{\ptl}{\tilde{p}}
\newcommand{\pxi}{\partial_{x_i}}
\newcommand{\bv}{\mathbf{v}}
\newcommand{\epc}{\frac{1}{\mu(u)}K(x)}
\newcommand{\ot}{\Omega_T}
\newcommand{\rn}{\mathbb{R}^N}
\newcommand{\qi}{q^I}
\newcommand{\qp}{q^P}
\newcommand{\phxt}{\tilde{\Phi}(\xi)}
\newcommand{\vt}{\tilde{v}}
\newcommand{\bvt}{\mathbf{\tilde{v}}}
\newcommand{\epck}{\frac{1}{\mu(u)}K_k(x)}
\newcommand{\itz}{\left(t_0-\frac{1}{2}r^2, t_0+\frac{1}{2}r^2\right)}
\newcommand{\pet}{\partial_\eta}
\newcommand{\qit}{\tilde{q^I}}
\newcommand{\ob}{\partial\Omega}
\newcommand{\pt}{\partial_t}
\newcommand{\wot}{W^{1,2}(\Omega)}
\newcommand{\br}{B_{r}(x_0)}
\newcommand{\bry}{B_{R}(y)}
\newcommand{\bryh}{B_{\frac{R}{2}}(y)}
\newcommand{\bdy}{B_{\delta}(y)}
\newcommand{\ibr}{\int_{B_{r}(x_0)}}
\newcommand{\ibry}{\int_{B_{R}(y)}}
\newcommand{\ibryh}{\int_{B_{\frac{R}{2}}(y)}}
\newcommand{\ibn}{\int_{B_{r_n}(0)}}
\newcommand{\ibnx}{\int_{B_{r_n}(x_0)}}
\newcommand{\aibr}{\dashint_{B_{r}(x_0)}}
\newcommand{\aibrh}{\dashint_{B_{\frac{r}{2}}(x_0)}}
\newcommand{\qr}{Q_{r}(z_0)}
\newcommand{\qrn}{Q_{r_n}(0)}
\newcommand{\qrnz}{Q_{r_n}(z_0)}
\newcommand{\qrh}{Q_{\frac{r}{2}}(z_0)}
\newcommand{\iqr}{\int_{Q_{r}(z_0)}}
\newcommand{\aiqr}{\dashint_{Q_{r}(z_0)}}
\newcommand{\ra}{\rightarrow}
\newcommand{\io}{\int_{\Omega}}
\newcommand{\iot}{\int_{\Omega_T}}
\newcommand{\ve}{\varepsilon}
\newcommand{\ek}{\eta_k}
\newcommand{\dk}{D_k(\bv)}
\newcommand{\vp}{\varphi}
\begin{document}
	\title[an elliptic-parabolic system modeling miscible fluid flows in porous media]{Partial regularity for an elliptic-parabolic system modeling miscible fluid flows in porous media}
	\author{Xiangsheng Xu}\thanks
	%\addressNon-uniqueness of weak solutions to a doubly nonlinear fourth order elliptic equation 
	{Department of Mathematics and Statistics, Mississippi State
		University, Mississippi State, MS 39762.
		{\it Email}: xxu@math.msstate.edu.}
	\keywords{Partial regularity, miscible displacement of one incompressible fluid by another, the Fefferman-Stein inequality, the (Hardy–Littlewood) maximal function
	} \subjclass{ 35D30,  35A01, 35K67,35B65.}
	\begin{abstract} In this paper we study the existence and partial regularity of weak solutions to an elliptic-parabolic system that models the single-phase miscible displacement of one incompressible fluid by another in a porous medium. The system is singular and involves quadratic nonlinearity. A partial regularity result is obtained via the Fefferman-Stein inequality.
\end{abstract}
\maketitle

\section{Introduction}
Let $\Omega$ be a bounded domain in $\rn$ with Lipschitz boundary $\ob$. For each $T>0$ we consider the initial boundary value problem
\begin{eqnarray}
	\divg\bv&=&\qi(x,t)-\qp(x,t)\ \ \mbox{in $\ot\equiv\Omega\times(0,T)$,}\label{pro1}\\
	\bv&=&-\epc\nabla p\ \ \mbox{in $\ot$,}\label{pro2}\\
	\phx\pt u-\divg\left(\phx D(\bv)\nabla u\right)+\nabla u\cdot\bv 
	%+\qi(x,t) u
	&=&\qi(x,t)\left(\hat{u}(x,t)-u\right) \ \mbox{in $\ot$},\label{pro3}\\
	K(x)\nabla p\cdot\mathbf{n}&=&0\ \ \mbox{on $\Sigma_T\equiv\ob\times(0,T)$,}\label{pro4}\\
	D(\bv)\nabla u\cdot\mathbf{n}&=&0\ \ \mbox{on $\Sigma_T$,}\label{pro5}\\
	u(x,0)&=&u_0(x)\ \ \mbox{on $\Omega$},\label{pro6}
\end{eqnarray}
where $\mathbf{n}$ is the unit outward normal to $\ob$. This problem can be proposed as a mathematical model for the single-phase miscible displacement of one incompressible fluid by another in a porous medium \cite{AZ,B,CE,DO,F}. In this case, $p=p(x,t)$ is the pressure in the mixture, $\bv=\bv(x,t)$ is the Darcy velocity, and $u=u(x,t)$ is the concentration of the invading fluid. 
%As for the  they
The remaining terms in the problem are given functions of their arguments. Their physical meanings and mathematical properties can be described as follows:
\begin{enumerate}
	\item[(H1)] The function $\phx$ is the porosity. It is assumed to satisfy
	$$\phx\in W^{1,2}(\Omega)\ \ \mbox{and}\ \ \phx \geq \lambda_0 \ \ \mbox{for some $\lambda_0>0$ and a.e. $x\in\Omega$}.$$
	\item[(H2)]The matrix $K(x)$ represents the absolute permeability of the porous medium. Entries of $K(x)$ are bounded measurable
	% H\"{o}lder continuous \overline{\Omega}\cap L^\infty(\Omega)\einf_{x\mathop{\in}\Omega}>0
	 functions on $\Omega$ and satisfy the uniform ellipticity condition, i.e, there is a positive number $c_0$ such that
	\begin{equation}
		\xi\cdot K(x)\xi\geq c_0|\xi|^2\ \ \ \mbox{for each $\xi\in\rn$ and a.e. $x\in\Omega$.}\nonumber
	\end{equation}
	\item[(H3)] The functions $\qi(x,t)$ and $\qp(x,t)$ are the injection and projection source terms, respectively, while $\hat{u}(x,t)$ is the injected concentration. They have the properties
	\begin{eqnarray}
		 0&\leq &\hat{u}\leq 1,\  \ \mbox{ $\qi\geq 0$, \  $\qp\geq 0$, }\nonumber\\
	\qi,\ \qp &\in &	 L^\infty(0,T; L^{\ell}(\Omega))\ \ \mbox{for each $\ell\geq 1$, and},\label{qcon}\\
	\io(\qi-\qp)dx&=&0\ \ \mbox{for a.e. $t\in (0,T)$}.\label{qcon2}
		%,(\ot)
	\end{eqnarray}
Here and in what follows we suppress the independent variables of a function whenever there is no confusion. 
The last condition \eqref{qcon2} is imposed to ensure that \eqref{pro1}, \eqref{pro2}, and \eqref{pro4} are consistent.
%, we must impose the condition
\item[(H4)] The matrix $D(\bv)$ is given by
\begin{eqnarray}
	D(\bv)&=&mI+|\bv|\left(\frac{b\bv\otimes\bv}{|\bv|^2}+a\left(I-\frac{\bv\otimes\bv}{|\bv|^2}\right)\right)\nonumber\\
	&=&(m+a|\bv|)I+(b-a)\frac{\bv\otimes\bv}{|\bv|},\label{ddef}
\end{eqnarray}
where $\bv\otimes\bv=\bv\bv^T$, $I$ is the $N\times N$ identity matrix, and
$$m>0,\ \ b\geq a>0.$$
The product $\phx D(\bv)$ is the so-called hydrodynamic dispersion tensor. It is not difficult to check that $D(x)$ is a Lipschitz function of $x$ and satisfies
%Note that
%$$\left|\frac{\bv\otimes\bv}{|\bv|}\right|\leq |\bv|.$$
%Naturally, we define
%$$\frac{\bv\otimes\bv}{|\bv|}=0\ \ \mbox{whenever $\bv=0$.}$$
%Moreover, we can easily verify
\begin{eqnarray}
	|D(\bv)|&\leq&b|\bv|+m,\label{ellip1}\nonumber\\
(a|\bv|+m)|\xi|^2&\leq &D(\bv)\xi\cdot\xi\leq (b|\bv|+m)|\xi|^2\ \ \mbox{for each $\xi\in\rn$.}\label{ellip}
\end{eqnarray}
Thus, equation \eqref{pro3} is singular on the set where $|\bv|$ is infinity. This constitutes the main difficulty in our mathematical analysis.
\item[(H5)] The function $\mu(u)$ is the viscosity, which is a continuous function of $u$ and satisfies
\begin{equation}
	\mu(u)\geq c_1\ \ \mbox{for some $c_1>0$.}\nonumber
	%\frac{1}{\lambda},\ \ \mu^{\prime}(u)\leq \lambda\ \ \mbox{for some $\lambda>1$.}\nonumber
\end{equation}
An example of $\mu(u)$ can be found in \cite{CE}.
\item[(H6)] $u_0(x)\in \wot\cap C^\alpha(\overline{\Omega})$ for some $\alpha\in (0,1)$. Furthermore, $0\leq u_0\leq 1$.
\end{enumerate}

As nonlinear problems need not have classical solutions, a suitable notion of a weak solution must be found for the problem. The following definition seems to be appropriate.
\begin{defn} Let (H1)-(H6) hold.
	We say $(p,u)$ is a weak solution to \eqref{pro1}-\eqref{pro6} if:
	\begin{enumerate}
		\item[(D1)] $u\in L^2(0,T;\wot)$ with $0\leq u\leq 1$, $p\in  L^\infty(0,T;\wot)$, and $\sqrt{1+|\bv|}|\nabla u|\in L^2(\ot)$, where $\bv$ is given as in \eqref{pro2};
		\item[(D2)]There hold
		\begin{equation}
			-\iot\bv\cdot\nabla\theta dxdt=\iot(\qi-\qp)\theta dxdt \ \ \mbox{for each $\theta\in L^2(0,T;\wot)$, and }\nonumber
		\end{equation}
	\begin{eqnarray}
		\lefteqn{	-\iot\phx u\pt\zeta dxdt+\iot\phx D(\bv)\nabla u\cdot\nabla\zeta dxdt+\iot\nabla u\cdot\bv\zeta dxdt}\nonumber\\
		&=&\iot\qi(\hat{u}-u)dxdt+\io u_0(x)\zeta(x,0)dx\ \ \mbox{for each $\zeta\in C^\infty(\overline{\ot})$ with $\zeta(x,T)=0$.}\nonumber
		\end{eqnarray}
	\end{enumerate}
\end{defn}
Obviously, the uniqueness in the $p$-component does not hold. If $(p, u)$ is a weak solution, so is $(p+c, u)$ for each constant $c$. Our solution to be obtained will satisfy
\begin{equation}
	\io pdx=0\ \ \mbox{for a.e. $t\in (0,T)$.}\nonumber
\end{equation}
We can easily deduce from \eqref{pro1}, \eqref{pro2}, and \eqref{pro4} that
\begin{eqnarray}
	\io\nabla u\cdot\bv\zeta dx&=&-\io u\divg\bv\zeta dx-	\io u\bv\cdot\nabla\zeta dx\nonumber\\
	&=&-\io u(\qi-\qp)\zeta dx-	\io u\bv\cdot\nabla\zeta dx \ \ \mbox{for a.e. $t\in (0,T)$}.\label{de4}
\end{eqnarray}
We will repeatedly make use of this fact.

Problems of this type have attracted a lot of attention. The first existence theorem for \eqref{pro1}-\eqref{pro6} was established in \cite{F} for the case $N=2$. This result was later generalized by Chen and Ewing \cite{CE} to the three dimensional case and with gravity effects and various boundary conditions. Amirat and Ziani \cite{AZ} studied the asymptotic behavior of the weak solution as $m$ goes to $0$ and they proved an existence assertion when $m=0$ and $N=2$, or $3$. Also see \cite{C1} for a related result. Existence with measure data was considered in \cite{DT}. We refer the reader to \cite{CD} for a numerical approach to the problem. 
%After this study was essentially completed,  it came to our attention 
In \cite{C} the author investigated regularity properties of solutions to a problem similar to ours. Unfortunately, Lemma 2 in \cite{C} does not seem to be correct.

In this paper, we will first construct a weak solution without any restrictions on the space dimensions. Our assumptions on the given data are also much weaker. To be precise, we have
\begin{thm}[Existence]\label{exi1} Let (H1)-(H6) hold. Assume:
	\begin{enumerate}
		\item[\textup{(H7)}]$\ob$ is $C^{1}$.
	%	\item[\textup{(H8)}] The entries of $K(x)$ lie in \textup{VMO}$(\Omega)$.
	\end{enumerate}  Then there is a weak solution $(p,u)$ to \eqref{pro1}-\eqref{pro6}.
	\end{thm}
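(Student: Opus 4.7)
The plan is to construct a weak solution by regularizing the singular dispersion tensor, solving the regularized coupled system via Schauder's fixed-point theorem, and passing to the limit via uniform estimates and compactness. For each $k\ge 1$ I would replace $D(\bv)$ in \eqref{pro3} by the bounded, uniformly elliptic tensor $D_k(\bv):=D(T_k\bv)$, where $T_k\bv=\bv$ if $|\bv|\le k$ and $T_k\bv=k\bv/|\bv|$ otherwise. Given $w\in L^2(\ot)$ with $0\le w\le 1$, I first solve the elliptic problem \eqref{pro1}--\eqref{pro2} with \eqref{pro4} and the normalization $\io p\,dx=0$ by Lax--Milgram using $\mu(w)$ in place of $\mu(u)$; Meyers' theorem delivers $\nabla p\in L^\infty(0,T;L^s(\Omega))$ for some $s>2$ depending only on the ellipticity constants, so $\bv:=-\frac{1}{\mu(w)}K(x)\nabla p$ lies in the same space. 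With $D_k(\bv)$ now bounded, the resulting linear parabolic equation together with \eqref{pro5} is solved by Galerkin; the maximum principle with $0\le\hat u\le 1$ and $\qi,\qp\ge 0$ enforces $0\le u\le 1$. The map $w\mapsto u$ is continuous and compact on $\{w\in L^2(\ot):0\le w\le 1\}$ (compactness via Aubin--Lions), so Schauder's fixed-point theorem yields a solution $(p_k,u_k)$ of the regularized system.

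The second step is to derive $k$-uniform a priori estimates. Testing the elliptic equation by $p_k$ and combining \eqref{qcon} with the Meyers bound gives $\|\nabla p_k\|_{L^\infty(0,T;L^s(\Omega))}\le C$, hence $\bv_k:=-\frac{1}{\mu(u_k)}K(x)\nabla p_k$ is bounded in the same space uniformly in $k$. Testing \eqref{pro3} by $u_k$ and rewriting the drift via \eqref{de4} as $\io\nabla u_k\cdot\bv_k\,u_k\,dx=-\tfrac12\io u_k^2(\qi-\qp)\,dx$, together with (H1), (H3), and the lower ellipticity bound in \eqref{ellip} applied to $T_k\bv_k$, yields
\begin{equation*}
\sup_{0<t<T}\io\phx u_k^2\,dx+\iot\phx\bigl(a|T_k\bv_k|+m\bigr)|\nabla u_k|^2\,dx\,dt\le C,
\end{equation*}
with $C$ independent of $k$. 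A standard duality argument bounds $\phx\pt u_k$ in $L^2(0,T;(\wot)')$, supplying the compactness needed to pass to the limit.

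In the limit $k\ra\infty$, Aubin--Lions gives $u_k\ra u$ strongly in $L^2(\ot)$ and a.e., while continuity of $\mu$ and continuous dependence of the elliptic problem on its coefficient give $\nabla p_k\ra\nabla p$ strongly in $L^q(\ot)$ for each $q<s$, whence $\bv_k\ra\bv=-\epc\nabla p$ strongly. Fatou applied to the energy bound then delivers $\sqrt{1+|\bv|}|\nabla u|\in L^2(\ot)$, which is (D1). The main obstacle is identifying the limit of the dispersion term $\iot\phx D_k(\bv_k)\nabla u_k\cdot\nabla\zeta\,dx\,dt$ in (D2): the coefficient $D_k(\bv_k)$ grows unboundedly with $|\bv|$ while $\nabla u_k$ converges only weakly. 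I would handle this by exploiting the decomposition \eqref{ddef} together with the strong convergence of $\bv_k$, factoring the product so that a.e.-convergent terms multiply the $L^2$-weakly convergent quantity $\sqrt{a|T_k\bv_k|+m}\,\nabla u_k$, and then invoking Vitali's convergence theorem; the uniform energy estimate provides the required equi-integrability. This identifies the limiting equation and completes verification of (D2).
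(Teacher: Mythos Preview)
Your strategy---truncate $D$, solve the regularized system by a fixed-point argument, derive the Meyers bound on $\nabla p_k$ and the energy bound $\iot(1+|\bv_k|)|\nabla u_k|^2\le C$, obtain strong convergence of $\bv_k$ via continuous dependence of the elliptic problem, then pass to the limit---is exactly the paper's approach. Two technical points to tighten: the uniform-in-$k$ bound on $\phx\pt u_k$ cannot be in $L^2(0,T;(\wot)')$, since $D_k(\bv_k)\nabla u_k$ is only bounded in $L^{\ell_0}(\ot)$ with $\ell_0=\frac{2s}{s+2}\in(1,2)$ (this is what the paper uses), and Aubin--Lions should be applied with that weaker dual space; also, the limit in the dispersion term is handled more directly in the paper by observing that the uniform Lipschitz property of $D_k$ plus $\bv_k\to\bv$ strongly in $L^2$ gives $D_k(\bv_k)\to D(\bv)$ strongly in $L^2$, whence the product with $\nabla u_k\rightharpoonup\nabla u$ weakly in $L^2$ converges weakly in $L^{\ell_0}$---your factoring/Vitali description is workable but less clean, and Vitali proper does not apply to $\nabla u_k$ since you have no pointwise convergence there.
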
 
Then we turn our attention to
 the regularity properties of weak solutions. As we mentioned earlier, equation \eqref{pro3} is singular when $|\bv|$ is infinity. Moreover, the diffusive coefficient matrix $\phx D(\bv)$ there depends on $\nabla p$. Thus, cross-diffusion takes place. Another challenging feature is that the quadratic nonlinear term $\nabla p\cdot \nabla u$ is present in the equation. In spite of these difficulties, 
 %In general, we cannot expect $|\nabla p|, |\nabla u|$ to be bounded. 
 %As a result, we are only able 
 we still manage to obtain a partial regularity theorem. Before we present our results, we introduce some notions and definitions. For $x_0\in\rn, t_0>0, z_0=(x_0, t_0), r>0$, we define 
$$\br=\{y\in\rn: |y-x_0|<r\},\ \ \qr=\br\times \left(t_0-\frac{1}{2}r^2, t_0+\frac{1}{2}r^2\right).$$
If $f\in L^1_{\textup{loc}}(\mathbb{R}^N)$,  we write% and $E$ is a measurable subset of $\mathbb{R}^N$ with $|E|\ne 0$, we set
$$ \dashint_{\br} f(x)dx=(f)_{x_0,r}=\frac{1}{|\br|} \int_Ef(x)dx.$$
%To be precise, we have:
%In case $E$ is a ball $\br$,
%$$(f)_{\br}=(f)_{x_0,r}.$$
Let  $(p,u)$ be a weak solution to \eqref{pro1}-\eqref{pro6}.
%To obtain high integrability for $\nabla p$, we need the function space VMO($\Omega$) \cite{DK}.
% A function $f\in \textup{VMO}(\Omega)$ if and only if $f\in L^1(\Omega)$ and
%$$\eta(r)\equiv\sup_{x_0\in\overline{\Omega}}\dashint_{\Omega\cap\br}\left|f(x)-(f)_{\Omega\cap\br}dy\right|dx\ra 0\ \ \mbox{as $r\ra 0$.}$$
A point $z_0\in\ot$ is called a regular point for the weak solution  if  for each $\ell>1$ there is a $r>0$ such that
\begin{equation}\label{in2}
	|\nabla p|\in L^\infty\left(t_0-2r^2,t_0+2r^2; L^\ell(\br)\right) .
\end{equation}
%Obviously, the set of all regular points is an open subset of $\ot$. 
If $z_0$ is not a regular point, then we say it is a singular point.
\begin{thm}[Partial regularity] \label{par2}Let (H1)-(H7) hold and $(p,u)$ be the weak solution constructed in Theorem \ref{exi1}. Assume:
	\begin{enumerate}
		%\item[\textup{(H7)}]$\ob$ is $C^{1}$;_{\textup{loc}}The entries of $K(x)$ lie in \textup{VMO}$(\Omega)$ and
		\item[\textup{(H8)}]  $\Phi(x)\in W^{1,\infty}(\Omega)$;
		\item[\textup{(H9)}] $N=2,3$, or $4$.
	\end{enumerate} 
% Then there is a weak solution $(p,u)$ to \eqref{pro1}-\eqref{pro6}. If, in addition,,
 Then $z_0=(x_0,t_0)\in\ot$ is a regular point whenever it satisfies
	\begin{eqnarray}
		\limsup_{\rho\ra 0}\sup_{t_0-\frac{1}{2}\rho^2\leq t\leq t_0+\frac{1}{2}\rho^2}\dashint_{B_\rho(x_0)}|\nabla p|^2dx&<&\infty\ \ \mbox{and }\label{in1}\\
		\textup{osc}_{\br}K(x)&\ra& 0 \ \ \mbox{as $r\ra 0$.} \label{in3}
	\end{eqnarray}
Here and in what follows $\sup_E f$ means $\textup{ess sup}_E f$ for any measurable function $f$ defined on measurable set $E$ and $\textup{osc}_{\br}K(x)$ denotes the oscillation of $K(x)$ over $\br$, i.e.,
$$\textup{osc}_{\br}K(x)=\sup_{\br}K(x)-\inf_{\br}K(x).$$
%there is $r>0$ such that\dashint_{B_\rho(x_0)}
%\begin{equation}
%	|\nabla p|\in\cup_{\ell\geq 1}L^\ell(Q_r(z_0)).
%\end{equation}	
\end{thm}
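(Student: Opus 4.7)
The pressure equation \eqref{pro1}--\eqref{pro2}, namely $-\divg\!\left(\frac{1}{\mu(u)}K(x)\nabla p\right)=\qi-\qp$, is elliptic in space with no time derivative; for each fixed $t$ it is a divergence-form elliptic equation with coefficient matrix $A(x,t):=\frac{1}{\mu(u(x,t))}K(x)$. The plan is to boost the local $L^\infty_tL^2_x$ bound on $\nabla p$ supplied by \eqref{in1} to $L^\infty_tL^\ell_x$ for every $\ell<\infty$ via the Calder\'on--Zygmund theory for elliptic equations with VMO coefficients, which ultimately rests on the Fefferman--Stein sharp-maximal-function inequality $\|f\|_{L^\ell}\le C\|f^\sharp\|_{L^\ell}$, $1<\ell<\infty$. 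The argument has three steps.

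\emph{Step 1 (Localization).} By \eqref{in3} and \eqref{in1}, fix $r_0>0$ so small that on $B_{r_0}(x_0)$ the oscillation of $K$ is less than any prescribed $\varepsilon>0$, and the quantity $\sup_t\dashint_{B_\rho(x_0)}|\nabla p|^2\,dx$ is bounded uniformly for $\rho\le r_0$ and $t\in(t_0-r_0^2,t_0+r_0^2)$.

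\emph{Step 2 (H\"older continuity of $u$ near $z_0$).} This is the crux. From $\bv=-\frac{1}{\mu(u)}K\nabla p$ and Step~1 one has $\bv\in L^\infty_tL^2_x$ locally. I would use \eqref{de4} to rewrite the drift term in \eqref{pro3} in divergence form, then exploit the ellipticity $D(\bv)\ge mI$ together with the upper bound $D(\bv)\le(b|\bv|+m)I$ from \eqref{ellip} to run a De~Giorgi--Moser iteration on the truncations $(u-k)_\pm$. The dimension restriction $N\le 4$ in hypothesis \textbf{(H9)} is what permits the dangerous term $\iint|\bv||\nabla u|^2$ in the energy estimate to be absorbed through the parabolic Sobolev embedding, closing the iteration. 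The outcome is $u\in C^\alpha\!\left(\overline{Q_{r_0/2}(z_0)}\right)$ for some $\alpha\in(0,1)$.

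\emph{Step 3 (VMO elliptic theory via Fefferman--Stein).} With $u$ continuous at $z_0$ and $K$ having vanishing oscillation at $x_0$, the matrix $A(x,t)$ is VMO in $x$ on small balls around $x_0$, uniformly in $t$ near $t_0$. The standard perturbation argument---comparing $p$ on each small ball with the solution $h$ of the constant-coefficient equation obtained by freezing $A$ at $(x_0,t_0)$, applying Campanato decay for $\nabla h$, and invoking Fefferman--Stein on the sharp maximal function of $\nabla p$---yields, for every $\ell\in(2,\infty)$, an estimate of the form
\begin{equation*}
\|\nabla p(\cdot,t)\|_{L^\ell(B_{r_0/4}(x_0))}\le C\bigl(\|\qi-\qp\|_{L^\ell(B_{r_0/2}(x_0))}+\|\nabla p(\cdot,t)\|_{L^2(B_{r_0/2}(x_0))}\bigr),
\end{equation*}
with $C$ independent of $t$. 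Combined with \eqref{qcon} and Step~1 this yields \eqref{in2}.

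The principal obstacle is Step~2: the diffusion $\phx D(\bv)$ is singular where $|\bv|$ is large, the drift $\bv$ is only known in $L^\infty_tL^2_x$ locally, and the quadratic interaction $\nabla u\cdot\bv$ is not absolutely integrable under crude bounds, so off-the-shelf De~Giorgi theory does not apply. The tension between the upper ellipticity bound $(b|\bv|+m)I$ and the available $L^2$ control of $\bv$ is precisely what forces the restriction $N\le 4$ via Sobolev embedding; once $u$ is H\"older continuous, Step~3 is comparatively routine VMO machinery.
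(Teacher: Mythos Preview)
Your overall architecture is right and matches the paper: first show $u$ is H\"older continuous at $z_0$, then freeze coefficients and run the Fefferman--Stein perturbation argument on the pressure equation. Your Step~3 is essentially the paper's final lemma, carried out almost exactly as you describe (cutoff $w=p\zeta$, compare with a constant-coefficient harmonic function on each ball, sharp-function bound, Fefferman--Stein plus Hardy--Littlewood, then an interpolation/iteration to close).

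The gap is in Step~2, and it is not just a matter of missing details. Two concrete issues:

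\emph{You have misidentified the dangerous term and the role of (H9).} The quantity $\iint |\bv|\,|\nabla u|^2$ is not the enemy; it is the \emph{favourable} term coming from the lower ellipticity bound $D(\bv)\xi\cdot\xi\ge (a|\bv|+m)|\xi|^2$, and the paper uses this weighted coercivity throughout. What actually drives the restriction $N\le 4$ is a reverse H\"older/Meyers estimate for the pressure: the paper first proves $|\nabla p|\in L^\infty_tL^s_x$ locally for some $s>2$ depending only on the ellipticity constants. The oscillation-decay argument then needs $2-\tfrac{N}{s}>0$ (this exponent appears in the definition of an auxiliary scale $k(r)=r^{2-N/s}\sup_t\|\qi\|_{s,B_r}$ and again when solving for the interpolation parameter in the De~Giorgi iteration). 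Since only $s>2$ is guaranteed, one must impose $N\le 4$. Your proposal never invokes the Meyers estimate, so you have no mechanism to upgrade $\bv\in L^\infty_tL^2_x$ to $L^\infty_tL^s_x$, and without that the iteration cannot close in the way you sketch.

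\emph{Plain De~Giorgi on $(u-k)_\pm$ with only $D(\bv)\ge mI$ does not work here.} Discarding the $a|\bv|$ part of the ellipticity leaves you unable to absorb the $|\bv|$-weighted cross terms that arise from $D(\bv)\nabla u\cdot\nabla\zeta$ in the Caccioppoli inequality. The paper instead follows DiBenedetto's degenerate-parabolic template: a logarithmic test function to propagate a measure-theoretic alternative in time, then an $L^\infty$ bound for the auxiliary function $v=\ln\bigl(\tfrac{\omega_r+k(r)}{2^{s_1}(M_r-u)+k(r)}\bigr)$ via a De~Giorgi iteration in rescaled variables, using the $L^s$ control of $\bv$ at every step. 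This yields $\omega_{r/2}\le\gamma\,\omega_r+cr^{2-N/s}$ and hence H\"older continuity. Your outline needs both the Meyers input and this logarithmic machinery (or an equivalent replacement) to be a proof.
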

%To gain more insight into the condition \eqref{in1},  we  pick a smooth function $\theta\in C^\infty_0(B_{2\rho}(y))$ so that
%$$\theta=1\ \ \mbox{on $B_\rho(x_0)$, $0\leq\theta\leq 1$ on $B_{2\rho}(x_0)$, and $|\nabla\theta|\leq \frac{c}{\rho}$.}$$

%Assume that $\rho$ is so small that $B_{2\rho}(x_0)\subset\Omega$. Use $(p-(p)_{x_0,2\rho})\theta^2$ as a test function in \eqref{pro1} to derive
%\begin{eqnarray}
%	\int_{B_\rho(x_0)}|\nabla p|^2dx&\leq&\frac{c}{\rho^2}\int_{B_{2\rho}(x_0)}\left(p-(p)_{x_0,2\rho}\right)^2dx+c\int_{B_{2\rho}(x_0)}|\qi-\qp|dx.
	%\nonumber\\
%	&&+c\int_{B_{2\rho}(x_0)}|\nabla p\cdot\nabla\zeta|dx+c\int_{B_{2\rho}(x_0)}|\nabla\zeta|^2dx,
%\end{eqnarray}
 Our result implies that low regularity of our weak solution at a single point can be improved to high regularity in a neighborhood of the point for the $p$-component. The singularity in \eqref{pro3} prevents us from obtaining similar results for the $u$ component. Since \eqref{in1} holds a.e. on $\ot$, our result implies that the singular set is discrete in the sense that it does not have any interior points in $\ot$. It would be interesting to know the parabolic Hausdorff dimension of the singular set as in \cite{LX,X2}.

There have been many papers that deal with results like \eqref{in2}. See \cite{DF} and the references therein. However, our situation here does not seem to fit into frameworks in \cite{DI,DF}. Instead, we employ an idea from \cite{X1}, which is based upon the the Fefferman-Stein inequality \cite{S}. To introduce the inequality,
let $f$ be a locally integrable function defined on $\rn$. 
%any locally integrable function
 The sharp function $f^\#$, associated to  $f$, is defined by
$$ f^\#(x)=\sup_{x\in \bry, R>0}\dashint_{B_R(y)}|f(\xi)-(f)_{y,R}|d\xi.$$
%where 
%$$(f)_{y,R}=\dashint_{B_R(y)}f(\xi)d\xi.$$
If $f\in L^\ell(\rn)$, with $1<\ell<\infty$, then the following inequality holds:
\begin{equation}\label{fs}
		\|f\|_{\ell, \rn}\leq c(\ell)\|f^\#\|_{\ell,\rn}.
\end{equation}
This inequality is often called the Fefferman-Stein inequality. Its proof can be found in (\cite{S}, p.148).
We also need to consider
the (Hardy–Littlewood) maximal function of $f$, denoted by $M(f)$. It is defined to be
$$M(f)(x)=\sup_{r>0}\dashint_{B_r(x)}|f(y)|dy.$$
The well known Hardy–Littlewood maximal inequality asserts that for each $\ell\in(1, \infty]$ there is a positive number $c=c(\ell)$ such that
\begin{eqnarray}
	\|M(f)\|_{\ell, \rn}\leq c_\ell\|f\|_{\ell,\rn} \ \ \mbox{for each $f\in L^\ell(\rn)$.}\label{hl}
\end{eqnarray}
Even though the constant $c_\ell$ in the above inequality is bounded as $\ell\ra \infty$ (\cite{S1}, p.7), the constant $c(\ell)$ in \eqref{fs} must tent to infinity as $\ell\ra \infty$ because a BMO function is not necessarily bounded (\cite{S}, p.140).

This paper is organized as follows. In Section 2 we present an approximation scheme for \eqref{pro1}-\eqref{pro6}, by which an existence assertion is established. Section 3 is devoted to the proof of partial regularity.

%Here and in what follows 
Throughout the paper the letter $c$ will be used to denote a generic positive number whose value can be computed from the given data at least in theory. H\"{o}lder's inequality is often used without acknowledgment.

\section{Existence}
In this section, we first design an approximation scheme. Then an existence assertion is established for the approximate problems. At the end of the section, we justify passing to the limit in the approximate problems to obtain a weak solution to \eqref{pro1}-\eqref{pro6}.

To remove the possible singularity in \eqref{pro3}, we introduce the function
$$\ek(s)=\left\{\begin{array}{ll}
	k&\mbox{if $s>k$},\\
	s&\mbox{if $s\leq k$},\ \ k=1,2,\cdots.
\end{array}\right.$$
Then we approximate $D(x)$ by
$$D_k(x)=(a\ek(|x|)+m)I+(b-a)\ek(|x|)\frac{x\otimes x}{|x|^2}, \ \ x\in\rn.$$
Obviously, $D_k(x)$ is a Lipschitz function of $x$ (even uniformly in $k$). Moreover,
\begin{equation}\label{add1}
	m|\xi|^2\leq D_k(x)\xi\cdot\xi=(a\ek(|x|)+m)|\xi|^2+(b-a)\ek(|x|)\frac{(x\cdot\xi)^2}{|x|^2}\leq (b\ek(|x|)+m)|\xi|^2
\end{equation}
for each $\xi\in\rn$. 
By (H1), we can pick a sequence
\begin{equation}\label{add6}
	\{\phkx\}\subset W^{1,\infty}(\Omega)
\end{equation}  with the property
\begin{equation}\label{add2}
	\phkx\ra \phx\ \ \mbox{strongly in $W^{1,2}(\Omega)$ and $\inf_{\overline{\Omega}}\phkx\geq\lambda_0$ for each $k$}
\end{equation}
Similarly,  (H2) asserts that we can choose 
\begin{equation}\label{add7}
	\{K_k(x)\}\subset\left(C(\overline{\Omega})\right)^{N\times N}
\end{equation} so that
\begin{eqnarray}
	K_k(x)&\ra& K(x)\ \ \mbox{strongly in $\left(L^\ell(\Omega)\right)^{N\times N}$ for each $\ell\geq 1$, }\label{add3}\\
		K_k(x)&\ra& K(x)\ \ \mbox{ weak$^*$ in $\left(L^\infty(\Omega)\right)^{N\times N}$, and}\label{add4}\\
	\xi\cdot K_k(x)\xi&\geq& c_0|\xi|^2\ \ \mbox{for each $\xi\in\rn, x\in\Omega$, and $k$.}\label{add5}
\end{eqnarray}
%Without loss of any generality, we may assume that \cap 
%\begin{equation}
%	\mbox{$\Phi(x)$ and the entries of $K(x)$ all lie in $C(\overline{\Omega})$.}
%\end{equation} 
%Otherwise, we can always use functions in $C(\overline{\Omega})$ to approximate them appropriately. For simplicity, we will not pursue the details. 
We form our approximate problems as follows:
\begin{eqnarray}
	\divg\bv&=&\qi-\qp\ \ \mbox{in $\ot$,}\label{apro1}\\
	\bv&=&-\epck\nabla p\ \ \mbox{in $\ot$,}\label{apro2}\\
%	\io pdx&=&0,\\
	\phkx\pt u-\divg\left(\phkx D_k(\bv)\nabla u\right)+\nabla u\cdot\bv 
	%+\qi(x,t) u
	&=&\qi\left(\hat{u}-u\right) \ \mbox{in $\ot$},\label{apro3}\\
	K_k(x)\nabla p\cdot\mathbf{n}&=&0\ \ \mbox{on $\Sigma_T$,}\label{apro4}\\
	D_k(\bv)\nabla u\cdot\mathbf{n}&=&0\ \ \mbox{on $\Sigma_T$,}\label{apro5}\\
	u(x,0)&=&u_0(x)\ \ \mbox{on $\Omega$},\label{apro6}
\end{eqnarray}
 Observe that equation \eqref{apro3} is now uniformly parabolic for each fixed $k$. 
\begin{thm}\label{existence} 	Assume (H1)-(H7) hold. Let $D_k(x), \phkx, K_k(x)$ be given as before.
 Then for each fixed $k$ there exists a weak solution $(p, u)$ to \eqref{apro1}-\eqref{apro6} with
	\begin{equation}
		p\in L^\infty(0,T; W^{1,\ell}(\Omega))\ \ \mbox{for each $\ell>1$ and $u\in C^{\beta,\frac{\beta}{2}}(\overline{\ot})$ for some $\beta\in(0,1).$ }
	\end{equation}
\end{thm}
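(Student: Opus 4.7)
The plan is to prove existence via a Schauder fixed-point argument that decouples the elliptic pressure equation from the parabolic concentration equation. For fixed $k$ each subproblem is nondegenerate, so one can iterate in a Hölder space and gain compactness. Fix $\beta\in(0,\alpha]$ (to be chosen) and consider the closed convex set
\[ X=\bigl\{w\in C^{\beta/2,\beta/4}(\overline{\ot}): 0\le w\le 1,\ w(\cdot,0)=u_0,\ \|w\|_{C^{\beta/2,\beta/4}(\overline{\ot})}\le M\bigr\}, \]
where $M$ is to be fixed later. Given $u\in X$, first solve for each $t\in(0,T)$ the elliptic Neumann problem
\[ -\divg\!\left(\epck\nabla p\right)=\qi-\qp\ \text{in }\Omega,\qquad K_k\nabla p\cdot\mathbf{n}=0\ \text{on }\ob,\qquad \io p\,dx=0. \]
Solvability is guaranteed by the compatibility condition \eqref{qcon2}. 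Because $K_k\in C(\overline\Omega)$ by \eqref{add7}, $u\in C(\overline{\ot})$, and $\mu$ is continuous with $\mu(u)\ge c_1$, the coefficient $\epck$ is continuous and uniformly elliptic; the Calderon--Zygmund $L^\ell$-theory on the $C^1$ domain (H7) then yields, pointwise in $t$, $\|p(\cdot,t)\|_{W^{1,\ell}(\Omega)}\le C(\ell,k)$ for every $\ell>1$. Hence $p\in L^\infty(0,T;W^{1,\ell}(\Omega))$ and $\bv=-\epck\nabla p\in L^\infty(0,T;L^\ell(\Omega))$ for every $\ell>1$.

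With this $\bv$ in hand, solve the linear parabolic problem
\[ \phkx\pt \tilde u-\divg(\phkx\dk\nabla\tilde u)+\nabla\tilde u\cdot\bv=\qi(\hat u-\tilde u),\quad \dk\nabla\tilde u\cdot\mathbf{n}=0,\quad \tilde u(\cdot,0)=u_0. \]
By \eqref{add1}, $\dk=D_k(\bv)$ is uniformly elliptic and bounded (with $k$-dependent bounds), so a standard Galerkin scheme yields a unique weak solution $\tilde u\in L^2(0,T;\wot)\cap C([0,T];L^2(\Omega))$. Testing with $(\tilde u)_-$ and $(\tilde u-1)_+$, together with $0\le\hat u\le 1$ and $\qi\ge 0$, gives $0\le\tilde u\le 1$. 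Because the principal part is uniformly parabolic and both $\bv$ and $\qi$ lie in $L^\infty(0,T;L^\ell)$ for arbitrarily large $\ell$, the parabolic De Giorgi--Nash--Moser theory (Ladyzhenskaya--Solonnikov--Uraltseva) delivers
\[ \tilde u\in C^{\beta,\beta/2}(\overline{\ot}),\qquad \|\tilde u\|_{C^{\beta,\beta/2}(\overline{\ot})}\le C(k,\text{data}), \]
for some $\beta\in(0,\alpha]$, uniformly for $u\in X$; the hypothesis $u_0\in C^\alpha$ (H6) secures Hölder regularity down to $t=0$.

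Setting $M=C(k,\text{data})$, the map $T:u\mapsto\tilde u$ sends $X$ into itself, and $T(X)$ is precompact in $X$ via the compact embedding $C^{\beta,\beta/2}\hookrightarrow C^{\beta/2,\beta/4}$. Continuity of $T$ on $X$ follows by continuous dependence: if $u_n\ra u$ uniformly on $\overline{\ot}$, then $\mu(u_n)\ra\mu(u)$ uniformly, which by $L^\ell$-stability of the elliptic problem yields $\nabla p_n\ra\nabla p$ strongly in $L^\ell$ and hence $\bv_n\ra\bv$; passage to the limit in the linear parabolic equation for $\tilde u_n$ then identifies the limit as $\tilde u$. Schauder's theorem produces $u\in X$ with $T(u)=u$, and $(p,u)$ is the desired weak solution of \eqref{apro1}--\eqref{apro6}. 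The main obstacle will be the continuity step, because $\dk$ depends nonlinearly on $\bv$: extracting strong $L^\ell$-convergence of $\nabla p_n$ requires combining the uniform $W^{1,\ell}$-bound from the elliptic step with Rellich--Kondrachov and uniqueness of the decoupled elliptic problem, and this strong convergence is what ultimately allows one to pass to the limit in every nonlinear term of the approximate parabolic equation.
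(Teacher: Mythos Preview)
Your proof is correct and follows essentially the same route as the paper: decouple into elliptic-then-parabolic, obtain $W^{1,\ell}$ bounds for $p$ from continuity of the coefficients on a $C^1$ domain, get H\"older regularity and the bounds $0\le\tilde u\le 1$ for the concentration from the linear parabolic theory in \cite{LSU}, and close with a fixed-point argument (the paper uses Leray--Schauder on $C(\overline{\ot})$ rather than Schauder on a H\"older ball, a purely cosmetic difference). One minor correction to your continuity step: Rellich--Kondrachov will not deliver strong convergence of $\nabla p_n$; the paper instead subtracts the elliptic equations for $p_n$ and $p_m$ and tests with $p_n-p_m$, obtaining directly
\[
\|\nabla(p_n-p_m)\|_{2,\Omega}\le c\,\|\mu(u_n)-\mu(u_m)\|_{\infty,\ot}\to 0,
\]
which is the clean way to get the strong gradient convergence you correctly identify as the crux.
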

\begin{proof}
	A weak solution will be obtained via the Leray-Schauder fixed point theorem (\cite{GT}, p.280). To this end, we define an operator $\mathbb{T}$ from $C(\overline{\ot})$ into itself as follows: Let $u\in C(\overline{\ot})$ be given. For a.e. $t\in(0,T)$ we solve the elliptic boundary value problem
\begin{eqnarray}
	-\divg\left(\epck\nabla p\right)&=&\qi-\qp\ \ \mbox{in $\Omega$,}\label{pe1}\\
	K_k(x)\nabla p\cdot\mathbf{n}&=&0\ \ \mbox{on $\ob$,}\label{pe2}\\
	\io p dx&=&0.\label{pe3}
\end{eqnarray}
%By virtue of , t
The existence of a unique weak solution $p$ in $\wot$ to the above problem is standard due to \eqref{add5}, (H5), and (H3). Use (H5) again to obtain that $\frac{1}{\mu(u)}$ is a continuous function on $\overline{\Omega}$ for each $t\in[0,T]$.
%An elementary calculation shows that 
%Recall 
This together with \eqref{add7} asserts
that the entries of $\epck$ still lie in $C(\overline{\Omega})$.  In view of (H7), we are in a position to invoke a  well known result in \cite{ADN1,ADN2}. 
%Since the elliptic coefficients in \eqref{pe1} are continuous, we easily infer from 
%Theorem 5 in \cite{DK}. 
Upon doing so, we conclude
  that for each $\ell>1$ there is a positive number $c$ such that
\begin{eqnarray}
	\|\nabla p\|_{\ell, \Omega}\leq c\|\qi-\qp\|_{\ell,\Omega}\ \ \mbox{for a.e $t\in(0,T)$}.\label{wl}
\end{eqnarray}
(In fact, our assumptions here are much stronger than what is needed for \eqref{wl} to hold. See the most recent result in \cite{DK} in this direction.)
Define $\bv$ as in \eqref{apro2}. Consider the problem
\begin{eqnarray}
	\phkx\pt \varphi-\divg\left(\phkx D_k(\bv)\nabla \varphi-\varphi\bv\right)+\qp \varphi&=&\qi\hat{u}\ \mbox{in $\ot$},\label{vp1}\\
		D_k(\bv)\nabla \varphi\cdot\mathbf{n}&=&0\ \ \mbox{on $\Sigma_T$,}\label{vp2}\\
	\varphi(x,0)&=&u_0(x)\ \ \mbox{on $\Omega$},\label{vp3}
\end{eqnarray}
Note that by virtue of \eqref{de4} we see that \eqref{vp1} is equivalent to \eqref{apro3}.
\begin{clm}\label{clm1}
	There exists a unique weak solution $\varphi$ to the preceding problem in the space $C^{\beta,\frac{\beta}{2}}(\overline{\ot})\cap L^2(0,T;\wot)$ for some $\beta\in (0,1)$. Moreover,
	\begin{equation}\label{vp8}
		0\leq\varphi\leq 1.
	\end{equation}
\end{clm}
\begin{proof}[Proof of the claim]
	Since we have \eqref{wl}, \eqref{add1}, \eqref{add2}, \eqref{add6}, and (H3), the classical existence and regularity theory for linear parabolic equations are applicable \cite{LSU}, from which follows the existence part. In particular, H\"{o}lder continuity of $\vp$ can be inferred from Theorem 10.1 in \cite{LSU}. (Our Neumann boundary condition here is not a problem due to (H7).)
	% In fact,  the range of $\mathbb{T}$ is a bounded set in $C^{\beta,\frac{\beta}{2}}(\overline{\ot})$ for some $\beta\in (0,1)$. 
	As for the uniqueness, it is enough for us to show that the only solution to the problem 
\begin{eqnarray}
	\phkx\pt \varphi-\divg\left(\phkx D_k(\bv)\nabla \varphi-\varphi\bv\right)+\qp \varphi&=&0\ \mbox{in $\ot$},\label{vp4}\\
	D_k(\bv)\nabla \varphi\cdot\mathbf{n}&=&0\ \ \mbox{on $\Sigma_T$,}\label{vp5}\\
	\varphi(x,0)&=&0\ \ \mbox{on $\Omega$}\label{vp6}
\end{eqnarray}
is the trivial solution $0$. To this end, we use $\varphi$ as a test function in \eqref{vp4} to derive
\begin{equation}\label{vp7}
	\frac{1}{2}\frac{d}{dt}\io\phkx\vp^2dx+\io\phkx\dk\nabla\vp\cdot\nabla\vp dx-\io\vp\nabla\vp\cdot\bv dx+\io\qp\vp^2dx= 0.
\end{equation}
In view of \eqref{pe1} and \eqref{apro2}, we have
\begin{equation}\label{vp9}
-\io\vp\nabla\vp\cdot\bv dx=-\frac{1}{2}\io\nabla \vp^2\cdot\bv dx=\frac{1}{2}\io(\qi-\qp)\vp^2dx.	
\end{equation}
Substitute this into \eqref{vp7} and integrate to derive
$$\io\phkx\vp^2(x,t)dx\leq 0.$$
Thus, $\vp\equiv 0$.

To establish \eqref{vp8}, we use $-\vp^-$ as a test function in \eqref{vp1} to get
\begin{eqnarray}
	\lefteqn{	\frac{1}{2}\frac{d}{dt}\io\phkx(\vp^-)^2dx+\io\phkx\dk\nabla\vp^-\cdot\nabla\vp^- dx}\nonumber\\
	&&-\io\vp^-\nabla\vp^-\cdot\bv dx+\io\qp(\vp^-)^2dx= -\io\qi\hat{u}\vp^-dx\leq 0.\label{vp10}
\end{eqnarray}
By a calculation similar to \eqref{vp9}, we have
$$-\io\vp^-\nabla\vp^-\cdot\bv dx=\frac{1}{2}\io(\qi-\qp)(\vp^-)^2dx.$$
This together with \eqref{vp10} and \eqref{add2} implies
$$\lambda_0\io(\vp^-)^2(x,t)dx\leq \io\phkx(\vp^-)^2(x,t)dx\leq \io\phkx(u_0^-)^2(x)dx=0,$$
from whence follows
$$\vp\geq 0\ \ \mbox{in $\ot$}.$$
Set
$$w=1-\vp.$$
We can easily verify that
$$	\phkx\pt w-\divg\left(\phkx D_k(\bv)\nabla w-w\bv\right)+\qp w=\qi(1-\hat{u})\geq 0\ \mbox{in $\ot$}.$$
%We can conclude from 
Repeat our earlier argument to get $w\geq 0$. This completes the proof of the claim.
\end{proof}
Equipped with this claim,  we can define
$$\mathbb{T}(u)=\vp.$$
%By ,for some $\beta\in (0,1)$Our earlier remarks indicate that % and contained in 
%Claim \ref{clm1} asserts that $\mathbb{T}$ is well-defined. 
Observe from the constructions of $D_k(x), \phkx$, and $K_k(x)$ and \eqref{wl} that the H\"{o}lder norm of $\vp$ depends only on $k$ and other given data in \eqref{vp1}-\eqref{vp3}. 
Thus, the range of $\mathbb{T}$ is bounded in $ C^{\beta,\frac{\beta}{2}}(\overline{\ot})$, where $\beta$ is given as in  Claim \ref{clm1}.  That is, 
 $\mathbb{T}$ maps bounded sets in $C(\overline{\ot})$ into precompact ones. To see that $\mathbb{T}$ is also continuous, we assume
$$u_m\ra u \ \ \mbox{strongly in $C(\overline{\ot})$.}$$
Replace $u$ by $u_m$ in \eqref{pe1}-\eqref{pe3} and denote the resulting solution by $p_m $.
% the  to  with  being replaced .  a bounded set
Note that
$$-\divg\left(\frac{1}{\mu(u_n)}K_k(x)\nabla p_n-\frac{1}{\mu(u_m)}K_k(x)\nabla p_m\right)=0\ \ \mbox{in $\Omega$.}$$
Use $ p_n-p_m$ as a test function in the above equation to derive
\begin{eqnarray}
\lefteqn{	\io \frac{1}{\mu(u_n)}K_k(x)\nabla( p_n-p_m)\cdot\nabla( p_n-p_m)dx}\nonumber\\
&=&\io\left(\frac{1}{\mu(u_m)}-\frac{1}{\mu(u_n)}\right)K_k(x)\nabla p_m\cdot\nabla( p_n-p_m)dx.\label{rvm18}
\end{eqnarray}
This combined with \eqref{add5}, \eqref{add4}, and (H5) implies
%We can easily establish
\begin{equation}
	\sup_{0\leq t\leq T}\io|\nabla(p_n-p_m)|^2dx\leq c\|\mu(u_n)-\mu(u_m)\|_{\infty,\ot}\ra 0 \ \ \mbox{as $n,m\ra\infty$}.\nonumber
\end{equation}
Let $\vp_m=\mathbb{T}(u_m)$. That is, 
\begin{eqnarray}
	\bv_m&=&-\frac{1}{\mu(u_m)}K_k(x)\nabla p_m,\label{vp18}\\
	\phkx\pt \vp_m-\divg\left(\phkx D_k(\bv_m)\nabla \vp_m-\vp_m\bv_m\right)+\qp \vp_m&=&\qi\hat{u}\ \mbox{in $\ot$},\label{vp11}\\
	D_k(\bv_m)\nabla \vp_m\cdot\mathbf{n}&=&0\ \ \mbox{on $\Sigma_T$,}\label{vp12}\\
	\vp_m(x,0)&=&u_0(x)\ \ \mbox{on $\Omega$}.\label{vp13}
\end{eqnarray}
%where $\bv_m$ is obtained from \eqref{apro2} in an obvious way. That is,
%\begin{equation}\label{bkd}
%	\bv_m=-\frac{1}{\mu(u_m)}K_k(x)\nabla p_m.
%\end{equation}
%$$$$
We infer from \eqref{wl} and Claim \ref{clm1} that
\begin{eqnarray}
&&\mbox{$	\{v_m\} $ is bounded in $L^\infty(0,T; W^{1,\ell}(\Omega))$ for each $\ell\geq 1$},\nonumber\\
&&\mbox{$\{\vp_m\}$ is bounded in $C^{\beta,\frac{\beta}{2}}(\overline{\ot})\cap L^2(0,T;\wot)$ for some $\beta\in (0,1)$.}\nonumber
\end{eqnarray}
We can pass to the limit in \eqref{vp11}-\eqref{vp13} at least along a subsequence. The whole sequence also converges due to the uniqueness assertion in Claim \ref{clm1}.

We still need to show that there is a positive number $c$ such that
$$\max_{\ot}|u|\leq c$$
for all $u\in C(\overline{\ot})$ and all $\sigma\in (0,1)$ satisfying $u=\sigma\mathbb{T}(u)$. This is an easy consequence of \eqref{vp8}. The proof of Theorem \ref{existence} is complete.
\end{proof}
\begin{proof}[Proof of Theorem \ref{exi1}] Denote by $(p_k,u_k)$ the weak solution to \eqref{apro1}-\eqref{apro6}. That is, we replace $ \bv, p, u$ in \eqref{apro1}-\eqref{apro6} by $\bv_k, p_k, u_k$, respectively. The challenge here is that many of the previous estimates do not hold uniformly in $k$. In particular,  \eqref{wl} and the boundedness of $D_k(\bv_k)$  are no longer available to us.
	% is no longer valid for all . Moreover, equation \eqref{apro3} is not uniformly parabolic. As a result,
	% the uniform H\"{o}lder continuity of $u_k$ in $k$ fails.\leq c\|\qi-\qp\|_{2,\Omega},\ \  	\| p_k\|_{\infty,\Omega}\leq 	c\|\qi-\qp\|_{\ell,\Omega}\ \ \mbox{for $\ell>\frac{N}{2}$.}
	 Fortunately, we can still collect enough estimates to justify passing to the limit in the weak formulation of problem \eqref{apro1}-\eqref{apro6}.
	  First, %recall from 
	  %that
	 % \begin{equation}
	  %	\io p_k dx=0\ \ \mbox{for a.e. $t\in(0,T)$}.\nonumber
	 % \end{equation} 
	  %This together with 
	  Poincar\'{e}'s inequality together with \eqref{pe3} implies
	  \begin{eqnarray}
	  	\|p_k\|_{2,\Omega}\leq c\|\nabla p_k\|_{2,\Omega}.\nonumber
	  \end{eqnarray}
  Use $p_k$ as a test function in \eqref{apro1} and apply the above inequality to deduce
	\begin{equation}
		\|\nabla p_k\|_{2,\Omega}\leq c	\|\qi-\qp\|_{2,\Omega}.\nonumber
	\end{equation}
By employing an argument similar to the proof of Theorem 8.15 in (\cite{GT}, p.189), we can conclude that for each $\ell>\frac{N}{2}$ there is a positive number $c$ such that
\begin{equation}\label{AA}
	\|p_k\|_{\infty,\Omega}\leq c\left(\|p_k\|_{2,\Omega}+\|\qi-\qp\|_{\ell,\Omega}\right)\leq c\|\qi-\qp\|_{\ell,\Omega}.
\end{equation}
In fact, since we have \eqref{pe3} the proof there can be carried over here in a straightforward manner.
%By our analysis in the subsequent section, we can further require
%The last inequality can be inferred from the proof of Theorem 8.15 in \cite{GT}.
Moreover, a result of \cite{GR} asserts that there is a $s>2$ such that
\begin{equation}\label{vp15}
		\|\nabla p_k\|_{s,\Omega}\leq c	\|\qi-\qp\|_{s,\Omega}\ \ \mbox{for a.e. $t\in(0,T)$}.
\end{equation}
This inequality is often called the  Meyers type estimate in the literature due to the first result in this direction by Meyers in \cite{M}.
As for $\{u_k\}	$, we have from Claim \ref{clm1} that
\begin{equation}\label{vp16}
	0\leq u_k\leq 1 \ \ \mbox{in $\ot$.}
\end{equation}
Use $u_k$ as a test function in \eqref{apro3} to derive
\begin{eqnarray}
	\lefteqn{\frac{1}{2}\frac{d}{dt}\io \phkx u_k^2dx+\io\phkx D_k(\bv_k)\nabla u_k\cdot\nabla u_kdx}\nonumber\\
	&&+\io u_k\nabla u_k\cdot\bv_kdx+\io\qi u_k^2dx=\io\qi\hat{u} dx.\label{vp14}
\end{eqnarray}
As before, we have
$$	\io u_k\nabla u_k\cdot\bv_kdx=-\frac{1}{2}\io u_k^2(\qi-\qp)dx.$$
This together with \eqref{vp14} implies
\begin{equation}\label{vp20}
	\sup_{0\leq t\leq T}\io \phkx u_k^2dx+\iot(1+|\bv_k|)|\nabla u_k|^2dxdt\leq c.
\end{equation}
It immediately follows that
\begin{equation}\label{rvp20}
	u_k\ra u\ \ \mbox{weakly in $L^2(0,T; W^{1, 2}(\Omega))$ at least along a subsequence.}
\end{equation}
Let $s$ be given as in \eqref{vp15}. Then
$$\ell_0\equiv\frac{2s}{s+2}\in (1,2).$$
We derive from \eqref{vp15} and \eqref{vp20} that
\begin{eqnarray}
\lefteqn{	\iot(1+|\bv_k|)^{\ell_0}|\nabla u_k|^{\ell_0}dxdt}\nonumber\\
&\leq&\sup_{0\leq t\leq T}\left(\io(1+|\bv_k|)^{\frac{\ell_0}{2-\ell_0}}dx\right)^{\frac{2-\ell_0}{2}}\left(\iot(1+|\bv_k|)|\nabla u_k|^2dxdt\right)^{\frac{\ell_0}{2}}\leq c.\label{rvp17}
\end{eqnarray}	
Combining this with \eqref{apro3} yields	
\begin{equation}
	\mbox{ $\{\pt\left(\phkx u_k\right)\}$ is bounded in $L^{\frac{\ell_0}{\ell_0-1}}\left(0,T;\left(W^{1, \frac{\ell_0}{\ell_0-1}}(\Omega)\right)^*\right)$.}\nonumber
%	\mbox{$\{(1+|\bv_k|)|\nabla u_k|\}$ is bounded in $L^\ell(\ot)$ for some $\ell>1$}.
\end{equation}
%That is, $\{\pt\left(\phkx u_k\right)\}$ is bounded in $L^{\frac{\ell}{\ell-1}}(0,T;W^{-1, \frac{\ell}{\ell-1}}(\Omega))$.
By a calculation similar to \eqref{rvp17}, we 
can derive from  \eqref{add2} and \eqref{vp20} that 
\begin{equation}\label{vp19}
	\mbox{$\{\phkx u_k\}$ is bounded in $L^2(0,T; W^{1, \frac{N}{N-1}}(\Omega))$.}\nonumber
\end{equation}
% that 	This enables us to invoke \geq
Here we have assumed $N>2$. If $N=2$, we can replace $\frac{N}{N-1}$ by any number in the interval $(1,2)$.
We can choose $s$ so close to $2$ that
$$\frac{\ell_0}{\ell_0-1}=\frac{2s}{s-2}>N.$$
Subsequently,
$$L^{\frac{N}{N-1}}(\Omega)\subset W^{1,\frac{N}{N-1}}(\Omega)\subset\left(W^{1, \frac{\ell_0}{\ell_0-1}}(\Omega)\right)^*.$$
Clearly, the first inclusion is compact, while the second inclusion is continuous.
We are in a position to invoke a version of Lions-Aubin \cite{SI} to obtain
	\begin{eqnarray}\label{vp17}
		u_k\ra u\ \ \mbox{a.e. on $\ot$}
	\end{eqnarray}
(pass to a subsequence if necessary).
This together with \eqref{vp16} implies that
\begin{eqnarray}
	u_k\ra u\ \ \mbox{strongly in $L^q(\ot)$ for each $q>1$.}\nonumber
\end{eqnarray}
With the aid of \eqref{rvm18} and \eqref{vp15}, we calculate
% and \eqref{vp15} that
\begin{eqnarray}
\lefteqn{\io|\nabla(p_n-p_m)|^2dx}\nonumber\\
&\leq& c\left\|\frac{1}{\mu(u_n)}K_n(x)-\frac{1}{\mu(u_m)}K_m(x)\right\|_{\frac{s}{s-2},\Omega}\|\nabla p_m\|_{s,\Omega}\|\nabla (p_n-p_m)\|_{s,\Omega}\nonumber\\
&\leq& c\|\mu(u_n)-\mu(u_m)\|_{\frac{s}{s-2},\Omega}+c\|K_n(x)-K_m(x)\|_{\frac{s}{s-2},\Omega}.\nonumber
%&\leq& c\|\mu(u_n)-\mu(u_m)\|_{\frac{s}{s-2},\Omega}.\nonumber
\end{eqnarray}
By the continuity of $\mu$, \eqref{vp17}, and \eqref{add3}, we have
$$\iot|\nabla(p_n-p_m)|^2dxdt\leq c\|\mu(u_n)-\mu(u_m)\|_{\frac{s}{s-2},\ot}+c\|K_n(x)-K_m(x)\|_{\frac{s}{s-2},\Omega}\ra 0\ \ \mbox{as $m,n\ra\infty$.}$$
Recall that $\io p_k dx=0$ for all $k$. Consequently,
$$\io|p_n-p_m|^2dx\leq c\io|\nabla(p_n-p_m)|^2dx.$$
We may assume
$$p_k\ra p\ \ \mbox{strongly in $L^2(0,T;\wot)$ and a.e on $\ot$}.$$
It immediately follows from \eqref{apro2} that
\begin{equation}
	\bv_k\ra-\frac{1}{\mu(u)}K(x)\nabla p\equiv \bv\ \ \mbox{strongly in $\left(L^2(\ot)\right)^N$.}\nonumber
\end{equation}
Recall from (H4) and the definition of $D_k$ that
%  $D(x)$ is a Lipschitz function of $x$ and
$$|D_k(x)-D_k(y)|\leq c|x-y|,\ \  |D_k(\bv_k)|\leq b|\bv_k|+m.$$
Thus, $D_k(\bv_k)\ra D(\bv)$ strongly in $\left(L^2(\ot)\right)^{N\times N}$. This, combined with \eqref{rvp20},  yields
%In view of \eqref{vp19}, \eqref{vp20}, and the fact that,  we have
$$D_k(\bv_k)\nabla u_k\ra D(\bv)\nabla u\ \ \mbox{weakly in $L^{\ell_0}(\ot)$}.$$
We are ready to pass to the limit in \eqref{apro1}-\eqref{apro6}. The proof is complete.
\end{proof}	

\section{Partial regularity}
In this section we offer the proof of Theorem \ref{par2}. The idea is to show that if $z_0\in\ot$ is such that \eqref{in1}  holds then $u$ is continuous at $z_0$. This combined with \eqref{in3} is enough to raise the integrability of $|\nabla p|$ in a small neighborhood of $z_0$. We divide the proof into several lemmas.
\begin{lem}\label{reh}
	There exist $s>2$ and $c>0$ such that
	\begin{equation}
		\left(\aibrh|\nabla p|^sdx\right)^{\frac{1}{s}}\leq c\left[\left(\aibr|\nabla p|^2dx\right)^{\frac{1}{2}}+r\left(\aibr|\qi-\qp|^sdx\right)^{\frac{1}{s}}\right]\nonumber
	\end{equation}
%{\color{red}r^2}
for all balls $\br$ with $B_{2r}(x_0)\subset\Omega$ and a.e. $t\in (0,T)$.
\end{lem}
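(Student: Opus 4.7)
The plan is to establish a Meyers-type higher-integrability estimate for $\nabla p$ via the standard Gehring/Giaquinta--Modica machinery. The equation for $p$ reads
$$-\divg\left(\frac{1}{\mu(u)}K(x)\nabla p\right)=\qi-\qp\quad\text{in }\Omega,$$
and since $\mu(u)\geq c_1>0$ (H5) and $K$ is bounded and uniformly elliptic (H2), the matrix $A(x)\equiv\frac{1}{\mu(u(x,t))}K(x)$ is bounded and uniformly elliptic in $x$, uniformly in $t$. This is the only structural feature we need.

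First I would derive a Caccioppoli inequality on concentric balls $B_\rho\subset B_R\Subset B_{2r}(x_0)$. Pick a standard cutoff $\eta\in C_c^\infty(B_R)$ with $\eta\equiv 1$ on $B_\rho$ and $|\nabla\eta|\leq C/(R-\rho)$, and test the $p$-equation (in its weak form on a ball, valid because $B_{2r}(x_0)\subset\Omega$) with $\eta^2(p-(p)_{x_0,R})$. Expanding, using ellipticity to absorb the gradient term on the left, and applying Young's inequality to the cross term and the forcing term, one obtains
$$\int_{B_\rho}|\nabla p|^2dx\leq\frac{C}{(R-\rho)^2}\int_{B_R}|p-(p)_{x_0,R}|^2dx+C(R-\rho)^2\int_{B_R}|\qi-\qp|^2dx.$$

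Second, I would improve the right-hand side with Sobolev--Poincar\'e. For $q=\frac{2N}{N+2}<2$ (or any $q\in(1,2)$ when $N=2$), Sobolev--Poincar\'e gives
$$\left(\dashint_{B_R}|p-(p)_{x_0,R}|^2dx\right)^{1/2}\leq CR\left(\dashint_{B_R}|\nabla p|^qdx\right)^{1/q}.$$
Choosing $\rho=R/2$, dividing the Caccioppoli estimate by $|B_{R/2}|$, and plugging in Sobolev--Poincar\'e yields the reverse H\"older-type inequality
$$\left(\dashint_{B_{R/2}}|\nabla p|^2dx\right)^{1/2}\leq C\left(\dashint_{B_R}|\nabla p|^qdx\right)^{1/q}+CR\left(\dashint_{B_R}|\qi-\qp|^2dx\right)^{1/2}.$$
A routine covering of $B_r(x_0)$ by balls $B_{R/2}$ with $B_R\subset B_{2r}(x_0)$ shows the same inequality persists (up to constants) with the right-hand ball any concentric enlargement, so the hypothesis of Gehring's lemma is met on $B_{2r}(x_0)$.

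Third, I would invoke the Giaquinta--Modica version of Gehring's lemma (see, e.g., Giaquinta's monograph), which upgrades such a reverse H\"older inequality with the $L^q$-norm on the right ($q<2$) to the $L^s$-norm on the left for some $s>2$, with a right-hand side controlled by the $L^2$-norm of $\nabla p$ and the $L^s$-norm of $\qi-\qp$ on the larger ball. Applying this on $B_{2r}(x_0)$ and a dyadic rescaling between the radii $r$ and $r/2$ delivers
$$\left(\aibrh|\nabla p|^sdx\right)^{1/s}\leq c\left[\left(\aibr|\nabla p|^2dx\right)^{1/2}+r\left(\aibr|\qi-\qp|^sdx\right)^{1/s}\right],$$
which is exactly the claimed bound. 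The exponent $s>2$ is the Gehring exponent and depends only on $N$, $c_0$, $c_1$, and the $L^\infty$-bound on $K$.

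I expect no serious obstacle here. The only mild care needed is in the treatment of the inhomogeneous forcing: the scaling factor $r$ in the last term of the statement comes from the $R$ in the Sobolev--Poincar\'e step. Since $\qi,\qp\in L^\infty(0,T;L^\ell(\Omega))$ for every $\ell\geq 1$ (H3), the right-hand side makes sense and the estimate will be useful in the subsequent pointwise analysis of Theorem~\ref{par2}.
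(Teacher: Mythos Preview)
Your proposal is correct and follows essentially the same route as the paper: Caccioppoli inequality via the test function $\eta^2(p-(p)_R)$, then Sobolev--Poincar\'e with exponent $\frac{2N}{N+2}$ to obtain a reverse H\"older inequality, and finally Gehring's lemma from Giaquinta's monograph. The only cosmetic difference is that the paper first rescales to the unit ball via $\xi=(x-x_0)/r$ and works there, which makes the appearance of the factor $r$ in front of the forcing term automatic upon returning to the original variables, whereas you track the scaling directly through the $R$ in the Sobolev--Poincar\'e step; the two are equivalent.
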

%Substitute \eqref{pro2} into \eqref{pro1} to get
%\begin{equation}\label{eqp}
%	-\divg\left(\epc\nabla p\right)=\qi-\qp\ \ \mbox{in $\ot$.}
%\end{equation}
Note that our notion of a weak solution does not imply that $u$ is continuous. % We can only claim that
Thus, the elliptic coefficients in \eqref{pe1} are merely bounded besides the uniform ellipticity condition.
Lemma \ref{reh} is just a local version of the Meyers type estimate we mentioned earlier. For completeness, we offer a proof here.
\begin{proof}
	Let $x_0\in\Omega$ and $r>0$ be given as in the lemma. We introduce the change of the space variables
	$$\xi=\frac{x-x_0}{r}$$
	and denote by $\ptl(\xi, t)$ the composition $p(x_0+r\xi, t)$. We can easily verify that $\ptl$ satisfies the equation
	\begin{equation}\label{ppe1}
		-\divg\left(J(\xi, t)\nabla \ptl\right)=r^2F(\xi, t)\ \ \mbox{in $B_2(0)$},
	\end{equation}
	where
	$$J(\xi, t)=\left.\epc\right|_{x=x_0+r\xi},\ \ F(\xi, t)=\left.(\qi-\qp)\right|_{x=x_0+r\xi}.$$
	For each $y\in B_2(0)$ and $R>0$ such that $\bry\subset B_2(0)$ we pick a function $\zeta\in C^\infty_0(\bry)$ with the properties 
	$$\zeta=1\ \ \mbox{on $\bryh$, $0\leq\zeta\leq 1$ on $\bry$, and $|\nabla\zeta|\leq \frac{c}{R}$ on $\bry$.}$$
	Use $\zeta^2(\ptl-\ptl_{y,R})$ as a test function in \eqref{ppe1} and then apply (H2), (H5), and the Sobolev embedding theorem  to get
	\begin{eqnarray}
		\lefteqn{\ibry J \nabla \ptl\cdot\nabla \ptl\zeta^2d\xi }\nonumber\\
		&=&-2\ibry J \nabla \ptl\cdot\nabla \zeta (\ptl-\ptl_{y,R})\zeta d\xi +r^2\ibry F\zeta^2(\ptl-\ptl_{y,R})d\xi \nonumber\\
		&\leq& \ve\ibry\zeta^2|\nabla \ptl|^2d\xi +\frac{c}{\ve R^2}\ibry(\ptl-\ptl_{y,R})^2d\xi \nonumber\\
		&&+r^2\|F\|_{\frac{2N}{N+2},\bry}\|\zeta(\ptl-\ptl_{y,R})\|_{\frac{2N}{N-2},\bry}\nonumber\\
		&\leq& 2\ve\ibry\zeta^2|\nabla \ptl|^2d\xi +\left(\frac{c}{\ve R^2}+\frac{c\ve}{R^2}\right)\ibry(\ptl-\ptl_{y,R})^2d\xi +\frac{cr^4}{\ve}\|F\|_{\frac{2N}{N+2},\bry}^2,\ \ \ve>0.\label{haha1}
	\end{eqnarray}
Here we have assumed $N>2$. We will do so whenever we must use the Sobolev embedding theorem. If $N=2$, we just need to do the following modification
\begin{eqnarray}
	\left(\io|f|^\ell dx\right)^{\frac{1}{\ell}}&\leq &c \left(\left(\io|\nabla f|^{\frac{2\ell}{\ell+2}}dx\right)^{\frac{\ell+2}{2\ell}}+\left(\io| f|^{\frac{2\ell}{\ell+2}}dx\right)^{\frac{\ell+2}{2\ell}}\right)\nonumber\\
	&\leq& c  \left(\left(\io|\nabla f|^{2}dx\right)^{\frac{1}{2}}+\left(\io| f|^{2}dx\right)^{\frac{1}{2}}\right)\ \ \mbox{for each $\ell\geq 2$}.\nonumber
	%\left(\dashint_{B_R(y)}|\ptl-\ptl_{y,R}|^\ell dx\right)^{\frac{1}{\ell}}\leq cR \left(\dashint_{B_R(y)}|\ptl-\ptl_{y,R}|^{\frac{2\ell}{\ell+2}}dx\right)^{\frac{\ell+2}{2\ell}}\leq cR \left(\dashint_{B_R(y)}|\ptl-\ptl_{y,R}|^{2}dx\right)^{\frac{1}{2}} \ \ \mbox{for each $\ell\geq 2$}.
\end{eqnarray}
Return to our proof. Use (H2) and (H5) again in \eqref{haha1}, choose $\ve$ suitably small, and take into account Poincar\'{e}'s inequality on balls (\cite{EG}, p.141) to derive
\begin{eqnarray}
	\ibryh|\nabla \ptl|^2d\xi &\leq&\frac{c}{R^2}\ibry(\ptl-\ptl_{y,R})^2d\xi +cr^4\|F\|_{\frac{2N}{N+2},\bry}^2\nonumber\\
	&\leq& cR^N\left(\dashint_{B_R(y)}|\nabla \ptl|^{\frac{2N}{N+2}}d\xi \right)^{\frac{N+2}{N}}+ cR^{N+2}\left(\dashint_{B_R(y)}|r^2F|^{\frac{2N}{N+2}}d\xi \right)^{\frac{N+2}{N}}.\label{rhi1}
\end{eqnarray}
Set
$$g=|\nabla \ptl|^{\frac{2N}{N+2}},\ \ f=|r^2F|^{\frac{2N}{N+2}}.$$
Then we can write \eqref{rhi1} as
$$ \dashint_{B_{\frac{R}{2}}(y)}g^{\frac{N+2}{N}}d\xi \leq c\left(\dashint_{B_R(y)} gd\xi \right)^{\frac{N+2}{N}}+cR^2 \dashint_{B_R(y)}f^{\frac{N+2}{N}}d\xi .$$
Now we are in a position to apply Gehring’s lemma (Proposition 1.1 in \cite{G}). Upon doing so, we obtain that there is a $\ell>\frac{N+2}{N}$ such that
$$\left(\dashint_{B_{\frac{1}{2}}(0)} g^\ell d\xi \right)^{\frac{1}{\ell}}\leq c\left[\left(\dashint_{B_1(0)} g^{\frac{N+2}{N}}d\xi \right)^{\frac{N}{N+2}}+\left(\dashint_{B_1(0)} f^\ell d\xi \right)^{\frac{1}{\ell}}\right].$$
 Take
 $$s=\frac{2N\ell}{N+2}.$$
 Return to the $x$-variable and keep in mind the fact that $\nabla\ptl=r\nabla p(x_0+r\xi)$ to get the lemma.
%such that $B_{2r}(x_0)\subset\Omega$. 
\end{proof}
%Without loss of generality, we may assume 
%$$s\leq \ell_0,$$ where $\ell_0$ is given as in \eqref{qcon}. 
%\begin{eqnarray}
This lemma combined with \eqref{qcon} implies that for each $\ell_1\geq s$ we have
	\begin{eqnarray}
		\left(\aibrh|\nabla p|^sdx\right)^{\frac{1}{s}}&\leq& c\left[\left(\aibr|\nabla p|^2dx\right)^{\frac{1}{2}}+r\left(\aibr|\qi-\qp|^{\ell_1}dx\right)^{\frac{1}{\ell_1}}\right]\nonumber\\
		 &\leq&c\left(\aibr|\nabla p|^2dx\right)^{\frac{1}{2}}+cr^{1-\frac{N}{\ell_1}}.\label{haha2}
	\end{eqnarray}
	%{\color{red}r^2}
	for all balls $\br$ with $B_{2r}(x_0)\subset\Omega$.

	Let $z_0=(x_0,t_0)\in\ot$ be given. For each $r>0$ with
	$\qr\subset\ot$
	we define
	$$M_r=\sup_{\qr}u, \ \ m_r=\inf_{\qr}u, \ \ \omega_r=M_r-m_r.$$

\begin{lem}\label{sob}Let $z_0, r$ be given as before. Assume
	\begin{equation}\label{vpe3}
	\limsup_{\rho\ra 0}	\sup_{t_0-\frac{1}{2}\rho^2\leq t\leq t_0+\frac{1}{2}\rho^2}\dashint_{B_\rho(x_0)}|\nabla p|^2dx<\infty.
	\end{equation}Then for each $\ell>\frac{N}{2}$ there exist two numbers $s_1>2$ and $\lambda_1>0$ such that either
	\begin{equation}\label{po3}
		\left|\br\cap\left\{u(x,t)>M_r-\frac{\omega_r}{2^{s_1}}\right\}\right|\leq\frac{7}{8}\left|\br\right|\ \ \textup{ for each $t\in\itz$,}
	\end{equation}
or
\begin{equation}\label{po4}
	\omega_r\leq \lambda_1r^{2-\frac{N}{\ell}}.
\end{equation}	
%where $s$ is given as in \eqref{haha2}.
\end{lem}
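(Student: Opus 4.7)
The plan is to adapt the classical logarithmic test function device of Ladyzhenskaya--Ural'tseva and DiBenedetto to equation \eqref{pro3}. Arguing by contradiction, I would assume that \eqref{po3} fails at some $t^*\in\itz$, so that
$$\left|\br\cap\bigl\{u(\cdot,t^*)>M_r-\omega_r/2^{s_1}\bigr\}\right|>\tfrac{7}{8}|\br|,$$
and show this forces \eqref{po4}. Set $k=M_r-\omega_r/2$, $H=\omega_r/2$, $c_0=\omega_r/2^{s_1}$, and introduce the logarithmic cutoff
$$\Psi(u)=\Bigl[\ln\frac{H+c_0}{H-(u-k)^++c_0}\Bigr]^+,$$
so that $\Psi$ is supported on $\{u>k\}$, $0\le\Psi\le c\,s_1$, $\Psi\ge c(s_1-1)$ on $\{u>M_r-\omega_r/2^{s_1}\}$, and $\Psi''=(\Psi')^2$ there.

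Let $\eta\in C_c^\infty(\br)$ with $\eta\equiv 1$ on $\brh$ and $|\nabla\eta|\le c/r$, and use $\zeta=\eta^2\Psi(u)\Psi'(u)\chi_{[t_0-r^2/2,\,t^*]}(t)$ as a test function in \eqref{pro3}. Using $\Psi\Psi'\partial_tu=\tfrac12\partial_t\Psi^2$ and $\Psi\Psi'\nabla u=\tfrac12\nabla\Psi^2$, and rewriting the convection term $\int\nabla u\cdot\bv\,\zeta\,dx$ via \eqref{de4}, I expect to obtain a differential identity whose diffusion contribution is bounded below by $m\int\eta^2|\nabla\Psi|^2dx$ thanks to \eqref{ellip}. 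The cross term $\int\eta\nabla\eta\cdot\phx D(\bv)\nabla u\,\Psi\Psi'dx$ is absorbed into this good term via Cauchy--Schwarz and the pointwise bound $D(\bv)\le(b|\bv|+m)I$; the remaining convection piece $\int\eta\nabla\eta\cdot\bv\,\Psi^2dx$ and the source $\int\qi(\hat u-u)\eta^2\Psi\Psi' dx$ are handled using hypothesis \eqref{vpe3}, which supplies $\sup_{t\in\itz}\int_\br|\bv|^2dx\le C r^N$, together with \eqref{qcon} and the crude bound $\Psi'\le c/c_0$.

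Integrating in time from $t_0-r^2/2$ to $t^*$, I anticipate an estimate of the shape
$$\int_\brh\phx\Psi^2(u(x,t^*))dx\le\int_\br\phx\Psi^2(u(x,t_0-r^2/2))dx+Cs_1 r^N+Cs_1\omega_r^{-1}r^{N+2-N/\ell}.$$
The failure of \eqref{po3} at $t^*$, combined with $\phx\ge\lambda_0$, forces the left-hand side to exceed $c\lambda_0(s_1-1)^2 r^N$. To upgrade the trivial pointwise bound $\Psi^2\le c^2 s_1^2$ at the initial time into a refined $Cs_1 r^N$ estimate, I would run a De~Giorgi iteration on the geometric sequence of levels $k_j=M_r-\omega_r/2^j$ paired with a Caccioppoli inequality for $(u-k_j)^+$. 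Combining the resulting bound $c(s_1-1)^2 r^N\le Cs_1 r^N+Cs_1\omega_r^{-1}r^{N+2-N/\ell}$, dividing by $s_1$, and choosing $s_1$ large enough that $c(s_1-1)^2/s_1$ exceeds $2C$, yields $\omega_r\le\lambda_1 r^{2-N/\ell}$, which is \eqref{po4}.

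The principal obstacle is the joint treatment of the singular convection $\bv$ and the $|\bv|$-dependent diffusion $D(\bv)$ in the logarithmic identity, given that \eqref{vpe3} delivers only $L^2$-in-space control of $\bv$ uniformly in $t\in\itz$; this control must be deployed in the cross and convection terms without spoiling the $\omega_r$-factor being tracked. A secondary technical difficulty is the refinement of the initial-time bound on $\int\Psi^2(u(t_0-r^2/2))dx$ from $s_1^2 r^N$ down to $s_1 r^N$, which I expect will require the local Meyers-type estimate \eqref{vp15} in a slightly enlarged ball so that the Caccioppoli and De~Giorgi iterations close in spite of the fact that only the $(1+|\bv|)^{1/2}$-weighted gradient norm is naturally controlled.
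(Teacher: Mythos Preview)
Your overall strategy---the logarithmic test function of DiBenedetto type---matches the paper's, but there is a real gap in your treatment of the initial-time integral $\int_{\br}\phx\Psi^2\bigl(u(x,t_0-\tfrac12 r^2)\bigr)\,dx$. You propose to ``upgrade the trivial pointwise bound $\Psi^2\le c^2 s_1^2$ at the initial time into a refined $Cs_1 r^N$ estimate'' by running a De~Giorgi iteration on the levels $k_j=M_r-\omega_r/2^j$. This is not a workable plan: Caccioppoli inequalities for $(u-k_j)^+$ produce energy bounds, not measure information at a \emph{fixed} time slice with no prior input, and there is no mechanism in that iteration to drop a power of $s_1$. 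Without that drop your inequality $c(s_1-1)^2 r^N\le Cs_1 r^N+\cdots$ never closes.

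The paper avoids this difficulty with a one-line observation you missed. At the initial time $t_0-\tfrac12 r^2$ the sets $\{u>M_r-\omega_r/2\}$ and $\{u<m_r+\omega_r/2\}$ are complementary in $\br$, so one of them has measure $\le\tfrac12|\br|$. Replacing $u$ by $M_r+m_r-u$ if necessary, one may assume it is the former; then $\Psi$ vanishes on a set of measure $\ge\tfrac12|\br|$ at the initial time and
\[
\int_{\br}\Psi^2\bigl(u(x,t_0-\tfrac12 r^2)\bigr)\,dx\;\le\;\tfrac12\,(s_1\ln2)^2\,|\br|.
\]
The gain is not in the power of $s_1$ but in the constant $\tfrac12$: comparing with the lower bound $(s_1-1)^2\ln^22\,|B_{(1-\gamma)r}|$ on the later-time integral, the ratio $\tfrac{s_1^2}{2(s_1-1)^2}\to\tfrac12$ as $s_1\to\infty$, which together with the annulus correction $N\gamma$ and the two lower-order terms (of order $s_1^{-1}$ and $\omega_r^{-1}r^{2-N/\ell}$ after division by $s_1^2$) lands below $\tfrac78$ for $s_1$ large---unless $\omega_r\le\lambda_1 r^{2-N/\ell}$. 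This initial-time dichotomy is the engine of the whole argument; your contradiction setup at a single $t^*$ never acquires the measure information needed to start it.
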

\begin{proof}Our proof here is largely inspired by Chapter III in \cite{D}. Obviously, either
	\begin{equation}\label{po1}
			\left|\br\cap\left\{u\left(x,t_0-\frac{1}{2}r^2\right)>M_r-\frac{\omega_r}{2}\right\}\right|\leq\frac{1}{2}\left|\br\right|, \ \ \mbox{or}
	\end{equation}
\begin{equation}\label{po2}
		\left|\br\cap\left\{M_r+m_r-u\left(x,t_0-\frac{1}{2}r^2\right)\geq M_r-\frac{\omega_r}{2}\right\}\right|\leq\frac{1}{2}\left|\br\right|
\end{equation}
Assume that \eqref{po1} holds. Otherwise, use the function $M_r+m_r-u$ in the subsequent proof instead of $u$.
 
Let $L>1$ be selected as below. Introduce the function
 $$\varphi(u)=\ln^+\left(\frac{H}{H-\left(u-M_r+\frac{\omega_r}{2}\right)^++\frac{\omega_r}{2^{1+L}}}\right),$$
 where
 $$H=\sup_{\qr}\left(u-M_r+\frac{\omega_r}{2}\right)^+.$$
 For each $\gamma\in(0,1)$ we choose a cutoff function $\zeta(x)\in C^\infty_0(\br)$ with the property
 $$0\leq \zeta(x)\leq 1,\ \ \zeta(x)=1\ \ \mbox{on $B_{(1-\gamma)r}(x_0)$, and $|\nabla\zeta(x)|\leq\frac{c}{r\gamma }$.}$$
 Note that 
 \begin{equation}\label{vpe22}
 	\left[\vp^2(u)\right]^{\prime\prime}=2(1+\vp(u))\left[\vp^\prime(u)\right]^2.\nonumber
 \end{equation}
% $$$$
 We may use $2\Phi^{-1}(x)\varphi^\prime(u)\varphi(u)\zeta^2$ as a test function in \eqref{pro3} to derive
 \begin{eqnarray}
 	\lefteqn{\frac{d}{dt}\ibr\varphi^2(u)\zeta^2dx+2\ibr D(\bv)\nabla\varphi(u)\cdot \nabla\varphi(u)\zeta^2dx}\nonumber\\
 	&&+2\ibr D(\bv)\nabla u\cdot \nabla u\vp(u)\left[\vp^\prime(u)\right]^2\zeta^2dx\nonumber\\
 	&=&-4\ibr D(\bv) \nabla\varphi(u)\cdot \nabla\zeta \varphi(u)\zeta dx+2\ibr D(\bv) \nabla\varphi(u)\cdot \nabla\ln\phx \varphi(u)\zeta^2 dx\nonumber\\
 	&&-2\ibr\Phi^{-1}(x)\nabla\varphi(u)\cdot\bv\varphi(u)\zeta^2dx+2\ibr\qi(\hat{u}-u)\Phi^{-1}(x)\varphi^\prime(u)\varphi(u)\zeta^2dx\nonumber\\
 	&\equiv& I_1+I_2+I_3+I_4.\label{vpe1}
 \end{eqnarray}
We can easily check
%A result of in (\cite{D}, p.28) asserts
\begin{equation}\label{vpe2}
%	\varphi^{\prime\prime}(u)\varphi(u)=\left(\varphi^\prime(u)\right)^2\varphi(u),\ \ 
0\leq \varphi(u)\leq L\ln2,\ \ 0\leq \varphi^\prime(u)\leq\frac{2^{1+L}}{\omega_r}.
\end{equation}
Equipped with these and \eqref{ellip}, we can estimate the third term from the left in \eqref{vpe1} as follows:
\begin{eqnarray}
%	2\ibr D(\bv)\nabla u\cdot\nabla u\varphi^{\prime\prime}(u)\varphi(u)\zeta^2dx&=&
	2\ibr D(\bv)\nabla u\cdot\nabla u\left(\varphi^{\prime}(u)\right)^2\varphi(u)\zeta^2dx%\nonumber\\
	&=&	2\ibr D(\bv)\nabla\varphi(u)\cdot\nabla\varphi(u)\varphi(u)\zeta^2dx\nonumber\\
	&\geq&2\min\{a,m\}\ibr(1+|\bv|)|\nabla\varphi(u)|^2\varphi(u)\zeta^2dx.\nonumber
\end{eqnarray}
On the other hand, by \eqref{ddef}, we have
\begin{eqnarray}
	I_1&\leq& 4\max\{m,b\}\ibr(1+|\bv|)|\nabla\varphi(u)|\varphi(u)|\nabla\zeta|\zeta dx\nonumber\\
	&\leq & \ve\ibr(1+|\bv|)|\nabla\varphi(u)|^2\varphi(u)\zeta^2dx+\frac{c}{\ve(\gamma r)^2}\ibr(1+|\bv|)\varphi(u)dx,
	 \ \ \ve>0.\nonumber
\end{eqnarray}
Similarly,
\begin{eqnarray}
	I_2&\leq&\ve\ibr(1+|\bv|)|\nabla\varphi(u)|^2\varphi(u)\zeta^2dx+\frac{c}{\ve}\ibr(1+|\bv|)|\nabla\ln\phx|^2\varphi(u)dx,\nonumber\\
	I_3&\leq&\ve\ibr|\bv||\nabla\varphi(u)|^2\varphi(u)\zeta^2dx+\frac{c}{\ve}\ibr|\bv|\left|\Phi^{-1}(x)\right|^2\varphi(u)dx.\nonumber
\end{eqnarray}
Use the preceding estimates in \eqref{vpe1}, choose $\ve$ suitably small in the resulting inequality, and thereby obtain
\begin{eqnarray}
	\frac{d}{dt}\ibr\varphi^2(u)\zeta^2dx&\leq&\frac{c}{(\gamma r)^2}\ibr(1+|\bv|)\varphi(u)dx+c\ibr(1+|\bv|)|\nabla\ln\phx|^2\varphi(u)dx\nonumber\\
	&&+c\ibr|\bv|\left|\Phi^{-1}(x)\right|^2\varphi(u)dx+2\ibr\qi(\hat{u}-u)\Phi^{-1}(x)\varphi^\prime(u)\varphi(u)\zeta^2dx.\nonumber
\end{eqnarray}
For $t\in \itz$ we integrate the above inequality over $(t_0-\frac{1}{2}r^2,t)$ and then apply (H8), (H3), and \eqref{vpe2} to derive
\begin{eqnarray}
\lefteqn{	\int_{B_{(1-\gamma)r}(x_0)}\varphi^2(u(x,t))dx}\nonumber\\
&\leq&	\int_{B_{r}(x_0)}\varphi^2\left(u\left(x,t_0-\frac{1}{2}r^2\right)\right)dx+\frac{cL}{(\gamma r)^2}\iqr(1+|\bv|)dxd\tau\nonumber\\
	&&+cL\iqr(1+|\bv|)dxd\tau+\frac{cL2^{1+L}}{\omega_r}\iqr \qi dxd\tau\nonumber\\
	&\leq&	\int_{B_{r}(x_0)}\varphi^2\left(u\left(x,t_0-\frac{1}{2}r^2\right)\right)dx+\frac{cL}{\gamma ^2}\sup_{ t_0-\frac{1}{2}r^2\leq t\leq t_0+\frac{1}{2}r^2}\ibr(1+|\bv|)dx\nonumber\\
	&&+cLr^2\sup_{ t_0-\frac{1}{2}r^2\leq t\leq t_0+\frac{1}{2}r^2}\iqr(1+|\bv|)dx+\frac{cr^2L2^{1+L}}{\omega_r}\sup_{ t_0-\frac{1}{2}r^2\leq t\leq t_0+\frac{1}{2}r^2}\ibr \qi dx.\label{vpe4}
\end{eqnarray}
By \eqref{po1} and the definition of $\varphi(u)$, we have
\begin{eqnarray}
	\int_{B_{r}(x_0)}\varphi^2\left(u\left(x,t_0-\frac{1}{2}r^2\right)\right)dx&=&\int_{\br\cap\left\{u(x,t_0-\frac{1}{2}r^2)>M_r-\frac{\omega_r}{2}\right\}}\varphi^2\left(u\left(x,t_0-\frac{1}{2}r^2\right)\right)dx\nonumber\\
	&\leq&\frac{L^2\ln^22}{2}|\br|.\nonumber
\end{eqnarray}
Remember that
$$|\bv|\leq c|\nabla p|.$$
We can infer from \eqref{vpe3} that there is a positive number $c$ such that
\begin{equation}
	\sup_{ t_0-\frac{1}{2}r^2\leq t\leq t_0+\frac{1}{2}r^2}\ibr(1+|\bv|)dx\leq c|\br|.\nonumber
\end{equation}
Let $\ell>\frac{N}{2}$ be given as in the lemma. 
We have
\begin{equation}
r^2	\ibr\qi dx\leq r^2\|\qi\|_{\ell, \br}|\br|^{1-\frac{1}{\ell}}\leq  cr^{2-\frac{N}{\ell}}|\br|.\nonumber
\end{equation}
Plugging the preceding three estimates into \eqref{vpe4} yields
\begin{eqnarray}
	\int_{B_{(1-\gamma)r}(x_0)}\varphi^2(u(x,t))dx&\leq&\frac{L^2\ln^22}{2}|\br|+\frac{cL}{\gamma ^2}|\br|\nonumber\\
	&&+cr^2L|\br|+\frac{cr^{2-\frac{N}{\ell}}L2^{1+L}}{\omega_r}|\br|.\label{vpe5}
\end{eqnarray}
Note that
$$\varphi^2(u)\geq \ln^2\left(\frac{\frac{\omega_r}{2}}{\frac{\omega_r}{2^L}}\right)=(L-1)^2\ln^22\ \ \mbox{on $B_{(1-\gamma)r}(x_0)\cap\left\{u(x,t)>M_r-\frac{\omega_r}{2^{1+L}}\right\}$.}$$
This combined with \eqref{vpe5} asserts
\begin{eqnarray}
\lefteqn{	\left|B_{(1-\gamma)r}(x_0)\cap\left\{u(x,t)>M_r-\frac{\omega_r}{2^{1+L}}\right\}\right|}\nonumber\\
&\leq& \frac{L^2}{2(L-1)^2}|\br|+\left(\frac{c}{L\gamma^2}+\frac{cr^2}{L}\right)|\br|+\frac{cr^{2-\frac{N}{\ell}}L2^{1+L}}{\omega_r}|\br|.\nonumber
\end{eqnarray}
Finally,
\begin{eqnarray}
\lefteqn{	\left|\br\cap\left\{u(x,t)>M_r-\frac{\omega_r}{2^{1+L}}\right\}\right|}\nonumber\\
&\leq&\left|B_{(1-\gamma)r}(x_0)\cap\left\{u(x,t)>M_r-\frac{\omega_r}{2^{1+L}}\right\}\right|+\left|\br\setminus B_{(1-\gamma)r}(x_0)\right|\nonumber\\
	&\leq&\left(\frac{L^2}{2(L-1)^2}+\frac{c}{L\gamma^2}+\frac{cr^2}{L}+\frac{cr^{2-\frac{N}{\ell}}L2^{1+L}}{\omega_r}+N\gamma\right)|\br|.\label{vpe6}
\end{eqnarray}
Now we take
$$\gamma=\frac{1}{8N}.$$
Then we pick $L$ so large that the sum of the first three terms between parentheses in \eqref{vpe6} satisfies
$$\frac{L^2}{2(L-1)^2}+\frac{c}{L\gamma^2}+\frac{cr^2}{L}\leq \frac{5}{8}.$$
The remaining term $\frac{cr^{2-\frac{N}{\ell}}L2^{1+L}}{\omega_r}$ there has two possibilities: Either 
$$\frac{cr^{2-\frac{N}{\ell}}L2^{1+L}}{\omega_r}\leq \frac{1}{8},\ \ \mbox{or}$$
$$\frac{cr^{2-\frac{N}{\ell}}L2^{1+L}}{\omega_r}>\frac{1}{8}.$$
In the former case , we have \eqref{po3}, while in the latter case we get \eqref{po4}. This completes the proof.
\end{proof}
	From here on, we assume \eqref{po3} holds. Then consider in $\qr$ the function
$$v=\ln\left(\frac{\omega_r+k(r)}{2^{s_1}(M_r-u)+k(r)}\right),$$
where
$$k(r)=r^{2-\frac{N}{s}}\sup_{ t_0-\frac{1}{2}r^2\leq t\leq t_0+\frac{1}{2}r^2}\|\qi\|_{s,\br}\ \ \mbox{and $s$ is given as in Lemma \ref{reh}}.$$
%\end{eqnarray}
Since $s>2$ and we have assumed that $N=2, 3,$ or $4$, we always have
\begin{equation}\label{scon}
	2-\frac{N}{s}>0.
\end{equation} 
Thus, Lemma \ref{sob} holds for $\ell=s$.

We can easily verify that
\begin{eqnarray}
\lefteqn{\phx\pt v-\divg\left(\phx D(\bv)\nabla v\right)	+\phx D(\bv)\nabla v\cdot\nabla v+\nabla v\cdot\bv}\nonumber\\
&=&\frac{2^{s_1}\qi(\hat{u}-u)}{2^{s_1}(M_r-u)+k(r)}\leq \frac{2^{s_1+1}}{k(r)}\qi\ \ \mbox{in $\qr$.}\label{ve1}
\end{eqnarray}
The last step is due to (H3) and (D1).
\begin{lem}We have
	\begin{eqnarray}
		\sup_{Q_{\frac{r}{2}}(z_0)}v&\leq&c\left(\aiqr\left(v^+\right)^2dxdt\right)^{\frac{1}{2}}\nonumber\\
		&&+c\sup_{ t_0-\frac{1}{2}r^2\leq t\leq t_0+\frac{1}{2}r^2}\aibr v^+dx+c\sup_{ t_0-\frac{1}{2}r^2\leq t\leq t_0+\frac{1}{2}r^2}\left(\aibr|\bv|^sdx\right)^{\frac{1}{s}}+c.\label{hope7}	
	\end{eqnarray}
\end{lem}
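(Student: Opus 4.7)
The target inequality is a parabolic Moser-type supremum bound for the non-negative subsolution $v^+$ of \eqref{ve1}. My plan is to test \eqref{ve1} with $(v^+)^{2q-1}\zeta^2$ for $q\ge 1$, where $\zeta(x,t)$ is a smooth cutoff supported in a parabolic cylinder $Q_{\rho_1}\subset\qr$ with $\zeta\equiv 1$ on a smaller $Q_{\rho_2}$. The time term produces $\tfrac{1}{2q}\tfrac{d}{dt}\int\phx(v^+)^{2q}\zeta^2\,dx$; integrating by parts in the divergence term gives the coercive $(2q-1)\int\phx D(\bv)\nabla v\cdot\nabla v\,(v^+)^{2q-2}\zeta^2\,dx$ plus a cross term against $\nabla\zeta$; and the self-quadratic term $\phx D(\bv)\nabla v\cdot\nabla v$ already on the left of \eqref{ve1} contributes an extra $\int\phx D(\bv)\nabla v\cdot\nabla v\,(v^+)^{2q-1}\zeta^2\,dx$, which is exactly what is needed to absorb the transport $\int\nabla v\cdot\bv(v^+)^{2q-1}\zeta^2\,dx$ via Young's inequality (the residual being bounded by $c\int|\bv|(v^+)^{2q-1}\zeta^2$). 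The $\nabla\zeta$ cross term is absorbed into the first coercive piece using Cauchy--Schwarz with the matrix $D(\bv)$, producing the usual $\tfrac{c}{(\rho_1-\rho_2)^2}\int(1+|\bv|)(v^+)^{2q}$ on the right.

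After absorption, and using (H8) together with \eqref{ellip}, I would obtain a Caccioppoli-type estimate
\[
\sup_t\int_{\br}\phx(v^+)^{2q}\zeta^2\,dx+\iqr(a|\bv|+m)\bigl|\nabla\bigl((v^+)^q\zeta\bigr)\bigr|^2\,dxdt\ \le\ \frac{cq^2}{(\rho_1-\rho_2)^2}\iqr(1+|\bv|)(v^+)^{2q}\,dxdt+\mathcal{S},
\]
where the source $\mathcal{S}$ arising from $\int\tfrac{2^{s_1+1}}{k(r)}q^I(v^+)^{2q-1}\zeta^2$ is rendered harmless by the choice $k(r)=r^{2-N/s}\sup_t\|q^I\|_{s,\br}$ together with H\"older in space and the positivity $2-\tfrac{N}{s}>0$ from \eqref{scon}. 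To boost integrability I combine the sup-in-time with the spatial gradient bound via the parabolic Sobolev embedding, gaining a factor $\kappa=1+\tfrac{2}{N}>1$; the $(1+|\bv|)$ weight on the right is then disposed of by one further H\"older in space using the Meyers exponent $s>2$ of Lemma~\ref{reh}, which is precisely the step that introduces the factor $\sup_t\bigl(\aibr|\bv|^s\,dx\bigr)^{1/s}$ visible in \eqref{hope7}.

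A telescoping iteration with $q_n=\kappa^n$ and radii $\rho_n=\tfrac{r}{2}(1+2^{-n})$ then yields the supremum bound. The $\sup_t\aibr v^+\,dx$ contribution is generated when, at a suitable level of the iteration, I split $\int(1+|\bv|)(v^+)^{2q}\le\|v^+\|_{L^\infty(\qr)}^{2q-1}\int(1+|\bv|)v^+$ and then pass the $L^\infty$ factor to the left; the resulting exponents $1-\tfrac{1}{2q_n}$ form a geometric series that sums, leaving $\sup_t\aibr v^+\,dx$ on the right. Starting the iteration one level lower, at $q=1$, produces the $(\aiqr(v^+)^2\,dxdt)^{1/2}$ term as the ``initial data'' of the recursion. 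The main obstacle I anticipate is the quantitative bookkeeping needed to ensure that the Young and H\"older splittings commute with the iteration so the final constant is independent of $n$; the $|\bv|$ weight in both the diffusion and the transport makes this delicate, but the Meyers exponent $s>2$ combined with the dimensional assumption (H9) -- which via \eqref{scon} preserves $2-\tfrac{N}{s}>0$ -- is precisely what makes the iteration close.
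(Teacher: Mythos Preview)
Your Moser-iteration plan differs from the paper's De~Giorgi level-set iteration, and as outlined it has a gap in dimensions $N=3,4$. From your Caccioppoli inequality the right-hand side carries $\frac{cq^2}{(\rho_1-\rho_2)^2}\int(1+|\bv|)(v^+)^{2q}$; after the isotropic parabolic Sobolev gain $\kappa=1+\tfrac{2}{N}$ on the left and H\"older with $|\bv|\in L^s$ on the right, one iteration step reads $\|v^+\|_{2q\kappa}^{2q}\le C_n\,\|v^+\|_{2qs'}^{2q}$ with $s'=\tfrac{s}{s-1}$. This closes only when $\kappa>s'$, i.e.\ $s>\tfrac{N+2}{2}$, whereas Lemma~\ref{reh} together with (H9) only delivers $s>\tfrac{N}{2}$ (this is exactly \eqref{scon}). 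For $N=3,4$ the Meyers exponent need not exceed $\tfrac{N+2}{2}$, so the telescoping does not converge. A repair is possible---iterate in the anisotropic norm $L^2_tL^{2s'}_x$ using the embedding $L^\infty_tL^2_x\cap L^2_tL^{2^*}_x$, which produces a gain $1-\tfrac{1}{s}+\tfrac{2}{N}>1$ exactly when $s>\tfrac{N}{2}$---but that is not the scheme you wrote down, and your claim that \eqref{scon} ``is precisely what makes the iteration close'' is not correct for the isotropic version.

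The paper sidesteps the issue by running De~Giorgi after rescaling to $Q_1(0)$, with the level $k$ chosen (see \eqref{kcon}) to dominate $\sup_\eta\|\tilde v^+\|_{s,B_1}+\sup_\eta\|\tilde{\bv}\|_{s,B_1}$. Testing \eqref{ve2} with $(\tilde v-k_{n+1})^+\zeta_{n+1}^2$ and absorbing via the quadratic term leaves residuals that are \emph{linear} in $(\tilde v-k_{n+1})^+$; a single H\"older then gives $a_n+b_n\le ck\,y_n$ with $y_n=\int_\eta\|(\tilde v-k_{n+1})^+\|_{s'}d\eta$, and the recursion $y_{n+1}\le C4^n k^{-\varepsilon/(2s)}y_n^{1+\varepsilon/(2s)}$ closes with $\varepsilon=\tfrac{s(2s-N)}{sN+2s-N}>0$ precisely for $s>\tfrac{N}{2}$. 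The two terms $\sup_t\aibr v^+$ and $\bigl(\aiqr(v^+)^2\bigr)^{1/2}$ in \eqref{hope7} are not generated by a split inside the iteration as you propose: the raw De~Giorgi output \eqref{hope10} contains $\sup_\eta\|\tilde v^+\|_{s,B_1}$, which is then interpolated between $L^\infty$ and $L^1$, and a second geometric iteration in the cutoff radius absorbs the $L^\infty$ piece to yield \eqref{hope7}.
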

\begin{proof}We employ a De Giorgi–Nash–Moser type of arguments. To this end,
 introduce the change of independent variables
$$\xi=\frac{x-x_0}{r}, \ \ \eta=\frac{t-t_0}{r^2}$$
and denote by $\tilde{f}(\xi, \eta)$ the composition $f(x_0+\xi r, t_0+r^2\eta)$ for a function $f$ defined on $\qr$. Subsequently, we can infer from \eqref{ve1} that
\begin{equation}\label{ve2}
	\phxt\pet \vt-\divg\left(\phxt D(\bvt)\nabla \vt\right)	+\phxt D(\bvt)\nabla \vt\cdot\nabla \vt+r\nabla \vt\cdot\bvt\leq \frac{2^{s_1+1}r^2}{k(r)}\qit\ \ \mbox{in $Q_1(0)$.}
\end{equation}
Pick a number $\delta\in(0,1)$. Define
$$r_n=1-\delta+\frac{\delta}{2^{n}},\ \ \ n=0,1,2,\cdots.$$
Choose a sequence of smooth cutoff functions $\zeta_n$ in $C^\infty_0(\mathbb{R}^{N+1})$ so that
\begin{eqnarray}
	\zeta_n(\xi,\eta)&=& 1\ \ \mbox{in $Q_{r_n}(0)$,}\nonumber\\
	\zeta_n(\xi,\eta)&=& 0\ \ \mbox{outside $Q_{r_{n-1}}(0)$,}\nonumber\\
	|\nabla\zeta_n(\xi,\eta)|&\leq&\frac{c2^n}{\delta}, \ \ |\pet\zeta_n(\xi,\eta)|\leq\frac{c2^n}{(1-\delta)\delta}\ \ \mbox{on $Q_{r_{n-1}}(0)$, and}\nonumber\\
	0&\leq &\zeta_n(\xi,\eta)\leq 1\ \ \mbox{on $Q_{r_{n-1}}(0)$. }\nonumber
\end{eqnarray}
Select
\begin{equation}\label{kcon}
	k\geq 2\left(\sup_{-\frac{1}{2}\leq\eta\leq \frac{1}{2}}\|\vt^+\|_{s,B_1(0)}+\sup_{-\frac{1}{2}\leq\eta\leq \frac{1}{2}}\|\bvt\|_{s,B_1(0)}+1\right)
\end{equation}
as below. Set
$$k_n=k-\frac{k}{2^{n+1}},\ \ n=0,1,2,\cdots.$$
Use $(\vt-k_{n+1})^+\zeta_{n+1}^2$ as a test function in \eqref{ve2} to get
\begin{eqnarray}
	\lefteqn{\frac{1}{2}\frac{d}{d\eta}\ibn\phxt\left[(\vt-k_{n+1})^+\right]^2\zeta_{n+1}^2d\xi+\ibn\phxt D(\bvt)\nabla\vt\cdot\nabla(\vt-k_{n+1})^+\zeta_{n+1}^2d\xi}\nonumber\\
	&&+\ibn\phxt D(\bvt)\nabla\vt\cdot\nabla\vt(\vt-k_{n+1})^+\zeta_{n+1}^2d\xi\nonumber\\
	&\leq&\ibn\phxt\left[(\vt-k_{n+1})^+\right]^2\zeta_{n+1}\pet\zeta_{n+1}d\xi\nonumber\\
	&&-2\ibn\phxt D(\bvt)\nabla\vt\cdot\nabla \zeta_{n+1}(\vt-k_{n+1})^+\zeta_{n+1}d\xi\nonumber\\
	&&-r\ibn\nabla\vt\cdot\bvt(\vt-k_{n+1})^+\zeta_{n+1}^2d\xi+\frac{cr^2}{k(r)}\ibn\qit(\vt-k_{n+1})^+\zeta_{n+1}^2d\xi.\label{hope1}
\end{eqnarray}
Note that
$$\nabla\vt=\nabla(\vt-k_{n+1})^+\ \ \mbox{in $\{\vt\geq k_{n+1}\}$}.$$
Arguing the same way as in the proof of \eqref{vpe4}, we arrive at
\begin{eqnarray}
	a_n+b_n&\equiv&\sup_{-r_n^2\leq\eta\leq 0}\ibn\left[(\vt-k_{n+1})^+\right]^2\zeta_{n+1}^2d\xi+\int_{\qrn}\left|\nabla\left[(\vt-k_{n+1})^+\zeta_{n+1}\right]\right|^2d\xi d\eta\nonumber\\
	&\leq&\frac{c4^n}{\delta^2(1-\delta)}\int_{\qrn}\left[(\vt-k_{n+1})^+\right]^2d\xi d\eta+\frac{c4^n}{\delta^2}\int_{\qrn}(1+|\bvt|)(\vt-k_{n+1})^+d\xi d\eta\nonumber\\
	&&+cr^2\int_{\qrn}|\bvt|(\vt-k_{n+1})^+d\xi d\eta+\frac{cr^2}{k(r)}\int_{\qrn}\qit(\vt-k_{n+1})^+d\xi d\eta.\label{hope2}
\end{eqnarray}
Introduce the sequence
$$y_n=\int_{-\frac{1}{2}r_n^2}^{\frac{1}{2}r_n^2}\|(\vt-k_{n+1})^+\|_{\frac{s}{s-1},B_{r_n}(0)}d\eta.$$
We wish to apply Lemma 4.1 in (\cite{D}, p.12) to this sequence. To this end, we estimate, with the aid of  \eqref{kcon}, that
\begin{eqnarray}
	\int_{\qrn}\left[(\vt-k_{n+1})^+\right]^2d\xi d\eta&\leq&\int_{-\frac{1}{2}r_n^2}^{\frac{1}{2}r_n^2}\|(\vt-k_{n+1})^+\|_{s,B_{r_n}(0)}|(\vt-k_{n+1})^+\|_{\frac{s}{s-1},B_{r_n}(0)}d\eta\nonumber\\
	&\leq&\sup_{-\frac{1}{2}r_n^2\leq\eta\leq \frac{1}{2}r_n^2}\|(\vt-k_{n+1})^+\|_{s,B_{r_n}(0)}y_n\leq \frac{k}{2}y_n.\nonumber
\end{eqnarray}
 Similarly,
\begin{eqnarray}
	\int_{\qrn}(1+|\bvt|)(\vt-k_{n+1})^+d\xi d\eta&\leq&\int_{-\frac{1}{2}r_n^2}^{\frac{1}{2}r_n^2}\|1+|\bvt|\|_{s,B_{r_n}(0)}|(\vt-k_{n+1})^+\|_{\frac{s}{s-1},B_{r_n}(0)}d\eta\nonumber\\
	&\leq&\sup_{-\frac{1}{2}r_n^2\leq\eta\leq \frac{1}{2}r_n^2}\|1+|\bvt|\|_{s,B_{r_n}(0)}y_n\leq cky_n,\nonumber\\
	\frac{cr^2}{k(r)}\int_{\qrn}\qit(\vt-k_{n+1})^+d\xi d\eta&\leq&\frac{cr^2}{k(r)}\sup_{-\frac{1}{2}\leq\eta\leq \frac{1}{2}}\|\qit\|_{s,B_{1}(0)}y_n\nonumber\\
	&\leq&\frac{cr^{2-\frac{N}{s}}}{k(r)}\sup_{t_0-\frac{1}{2}r^2\leq\eta\leq t_0+\frac{1}{2}r^2}\|\qit\|_{s,B_{r}(x_0)}y_n\leq cy_n.\nonumber
\end{eqnarray}
Here the last step is due to the definition of $k(r)$. Collecting these estimates in \eqref{hope2} yields
\begin{equation}\label{hope3}
	a_n+b_n\leq \frac{c4^nk}{\delta^{2}(1-\delta)}y_n.
\end{equation}
Set
$$A_n(\eta)=B_{r_n}(0)\cap\left\{\vt(\xi,\eta)\geq k_{n+1}\right\}.$$
Let $\ve\in (0,s)$ be selected as below. We deduce from the Sobolev embedding theorem that
\begin{eqnarray}
	y_{n+1}&\leq&\int_{-\frac{1}{2}r_n^2}^{\frac{1}{2}r_n^2}\left(\ibn\left[(\vt-k_{n+1})^+\zeta_{n+1}\right]^{\frac{s}{s-1}}d\xi\right)^{\frac{s-1}{s}}d\eta\nonumber\\
	&\leq&\int_{-\frac{1}{2}r_n^2}^{\frac{1}{2}r_n^2}\left(\ibn\left[(\vt-k_{n+1})^+\zeta_{n+1}\right]^{\frac{s-\ve}{s-1}+\frac{\ve}{s-1}}d\xi\right)^{\frac{s-1}{s}}d\eta\nonumber\\
	&\leq&\int_{-\frac{1}{2}r_n^2}^{\frac{1}{2}r_n^2}\|(\vt-k_{n+1})^+\zeta_{n+1}\|_{\frac{2N}{N-2}, B_{r_n}(0)}^{\frac{s-\ve}{s}}\|(\vt-k_{n+1})^+\zeta_{n+1}\|_{\frac{2N\ve}{2N(s-1)-(N-2)(s-\ve)}, B_{r_n}(0)}^{\frac{\ve}{s}}d\eta\nonumber\\
		&\leq&ca_n^{\frac{\ve}{2s}}\int_{-\frac{1}{2}r_n^2}^{\frac{1}{2}r_n^2}\|\nabla\left[(\vt-k_{n+1})^+\zeta_{n+1}\right]\|_{2, B_{r_n}(0)}^{\frac{s-\ve}{s}}|A_n(\eta)|^{\frac{2N(s-1)-(N-2)(s-\ve)-N\ve}{2Ns}}d\eta\nonumber\\
			&\leq&ca_n^{\frac{\ve}{2s}}b_n^{\frac{s-\ve}{2s}}\left(\int_{-\frac{1}{2}r_n^2}^{\frac{1}{2}r_n^2}|A_n(\eta)|^{\frac{2N(s-1)-(N-2)(s-\ve)-N\ve}{N(s+\ve)}}d\eta\right)^{\frac{s+\ve}{2s}}.\label{hope4}
\end{eqnarray}
Observe that
\begin{equation}\label{hope5}
	y_n\geq \int_{-\frac{1}{2}r_n^2}^{\frac{1}{2}r_n^2}\left(\int_{A_n(\eta)}\left[(\vt-k_{n})^+\right]^{\frac{s}{s-1}}d\xi\right)^{\frac{s-1}{s}}d\eta\geq\frac{k}{2^{n+2}}\int_{-\frac{1}{2}r_n^2}^{\frac{1}{2}r_n^2}|A_n(\eta)|^{\frac{s}{s-1}}d\eta.
\end{equation}
%$$$$
Thus, we choose $\ve$ so that 
$$\frac{2N(s-1)-(N-2)(s-\ve)-N\ve}{N(s+\ve)}=\frac{s}{s-1}.$$
Solve it to obtain
$$\ve=\frac{s(2s-N)}{sN+2s-N}.$$
%Since we have assumed $s>\frac{N}{2}$
According to \eqref{scon}, the number $\ve$ so obtained lies in $(0,s)$. Keep this $\ve$. Then use  \eqref{hope5}, and \eqref{hope3} in \eqref{hope4} to obtain
\begin{eqnarray}
	y_{n+1}&\leq &\left(\frac{c4^nk}{\delta^{2}(1-\delta)}y_n\right)^{\frac{1}{2}}\left(\frac{2^{n+2}y_n}{k}\right)^{\frac{s+\ve}{2s}}\nonumber\\
	&\leq&\frac{c4^n}{\delta\sqrt{1-\delta}k^{\frac{\ve}{2s}}}y_n^{1+\frac{\ve}{2s}}.\nonumber
\end{eqnarray}
Lemma 4.1 in (\cite{D}, p.12) asserts if we choose $k$ so large that
$$y_0\leq \left(\frac{\delta\sqrt{1-\delta}k^{\frac{\ve}{2s}}}{c}\right)^{\frac{2s}{\ve}}\left(\frac{1}{4}\right)^{\frac{4s^2}{\ve^2}},$$
then
\begin{equation}
	\sup_{Q_{1-\delta}(0)}\vt\leq k.\nonumber
\end{equation}
In view of \eqref{kcon}, it is enough for us to take
$$k=\frac{c}{\delta^{\frac{2s}{\ve}}(1-\delta)^{\frac{s}{\ve}}}y_0+2\left(\sup_{-\frac{1}{2}\leq\eta\leq \frac{1}{2}}\|\vt^+\|_{s,B_1(0)}+\sup_{-\frac{1}{2}\leq\eta\leq \frac{1}{2}}\|\bvt\|_{s,B_1(0)}+1\right).$$
Recall that
$$y_0=\int_{-\frac{1}{2}}^{\frac{1}{2}}\left\|\left(\vt-\frac{k}{2}\right)^+\right\|_{\frac{s}{s-1}, B_1(0)}d\eta\leq |B_1(0)|^{\frac{s-1}{s}-\frac{1}{2}}\|\vt^+\|_{2,Q_1(0)}.$$
Therefore, we have
\begin{eqnarray}
	\sup_{Q_{1-\delta}(0)}\vt&\leq&2\sup_{-\frac{1}{2}\leq\eta\leq \frac{1}{2}}\|\vt^+\|_{s,B_1(0)}+\frac{c}{\delta^{\frac{2s}{\ve}}(1-\delta)^{\frac{s}{\ve}}}\|\vt^+\|_{2,Q_1(0)}\nonumber\\
	&&+2\sup_{-\frac{1}{2}\leq\eta\leq \frac{1}{2}}\|\bvt\|_{s,B_1(0)}+2.\label{hope10}
\end{eqnarray}
According to the interpolation inequality (\cite{GT}, p.146), we have
\begin{equation}
	\|\vt^+\|_{s,B_1(0)}\leq \sigma\|\vt^+\|_{\infty,B_1(0)}+\frac{1}{\sigma^{s-1}}\|\vt^+\|_{1,B_1(0)},\ \ \sigma>0.\nonumber
\end{equation}
Use this in \eqref{hope10} appropriately and keep in mind the fact that we may assume $ \sup_{Q_{1-\delta}(0)}\vt^+=\sup_{Q_{1-\delta}(0)}\vt$ to derive
\begin{eqnarray}
\sup_{Q_{1-\delta}(0)}\vt&\leq&\sigma\sup_{Q_{1}(0)}\vt+\frac{2^s}{\sigma^{s-1}}\sup_{-\frac{1}{2}\leq\eta\leq \frac{1}{2}}\|\vt^+\|_{1,B_1(0)}\nonumber\\
&&+\frac{c}{\delta^{\frac{2s}{\ve}}(1-\delta)^{\frac{s}{\ve}}}\|\vt^+\|_{2,Q_1(0)}+2\sup_{-\frac{1}{2}\leq\eta\leq \frac{1}{2}}\|\bvt\|_{s,B_1(0)}+2.\label{hope9}	\nonumber
\end{eqnarray}
Return to the $(x, t)$ variables to get
\begin{eqnarray}
	\sup_{Q_{(1-\delta)r}(z_0)}v&\leq&\sigma\sup_{Q_{r}(z_0)}v+\frac{2^s}{\sigma^{s-1}}\sup_{ t_0-\frac{1}{2}r^2\leq t\leq t_0+\frac{1}{2}r^2}\aibr v^+dx\nonumber\\
	&&+\frac{c}{\delta^{\frac{2s}{\ve}}(1-\delta)^{\frac{s}{\ve}}}\left(\aiqr\left(v^+\right)^2dxdt\right)^{\frac{1}{2}}\nonumber\\
	&&+2\sup_{ t_0-\frac{1}{2}r^2\leq t\leq t_0+\frac{1}{2}r^2}\left(\aibr|\bv|^sdx\right)^{\frac{1}{s}}+2.\nonumber	
\end{eqnarray}
Take $\delta=\frac{1}{2^{n+2}-1}$. Then replace $r$ by $r_{n+1}\equiv r-\frac{r}{2^{n+2}}$ in the above inequality to derive
\begin{eqnarray}
	\sup_{Q_{r_n}(z_0)}v&\leq&\sigma\sup_{Q_{r_{n+1}}(z_0)}v+\frac{c}{\sigma^{s-1}}\sup_{ t_0-\frac{1}{2}r^2\leq t\leq t_0+\frac{1}{2}r^2}\aibr v^+dx\nonumber\\
	&&+c4^{\frac{sn}{\ve}}\left(\aiqr\left(v^+\right)^2dxdt\right)^{\frac{1}{2}}+c\sup_{ t_0-\frac{1}{2}r^2\leq t\leq t_0+\frac{1}{2}r^2}\left(\aibr|\bv|^sdx\right)^{\frac{1}{s}}+2.	\nonumber
\end{eqnarray}
Iterate over $n$ as in (\cite{D}, p.13) to deduce
\begin{eqnarray}
\lefteqn{	\sup_{Q_{\frac{r}{2}}(z_0)}v}\nonumber\\
&\leq&\sigma^n\sup_{Q_{r_{n}}(z_0)}v+c\left(\aiqr\left(v^+\right)^2dxdt\right)^{\frac{1}{2}}\sum_{i=0}^{n-1}\left(4^{\frac{s}{\ve}}\sigma\right)^i\nonumber\\
	&&+\left(\frac{c}{\sigma^{s-1}}\sup_{ t_0-\frac{1}{2}r^2\leq t\leq t_0+\frac{1}{2}r^2}\aibr v^+dx+c\sup_{ t_0-\frac{1}{2}r^2\leq t\leq t_0+\frac{1}{2}r^2}\left(\aibr|\bv|^sdx\right)^{\frac{1}{s}}+2\right)\sum_{i=0}^{n-1}\sigma^i.\label{hope8}
\end{eqnarray}
Choose $\sigma$ suitably small. Then take $n\ra \infty$ to arrive at \eqref{hope7}. The proof is complete.
\end{proof}
\begin{lem}\label{lab}We have
	$$\limsup_{\rho\ra 0}\left(\dashint_{Q_\rho(z_0)}\left(v^+\right)^2dxdt+\sup_{t_0-\rho^2\leq t\leq t_0}\dashint_{B_\rho(x_0)} v^+dx\right)<\infty.$$
\end{lem}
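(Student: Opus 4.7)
The plan is a parabolic Moser log-type energy estimate for $v$ combined with the measure condition \eqref{po3} and a Poincar\'e inequality for functions vanishing on a positive-measure fraction of the ball.

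First I would test inequality \eqref{ve1} against $\zeta^2$, where $\zeta\in C_0^\infty(B_\rho(x_0))$ is a standard spatial cutoff with $\zeta\equiv 1$ on $B_{\rho/2}(x_0)$, $0\le \zeta\le 1$, $|\nabla\zeta|\le C/\rho$. After integration by parts on the divergence term, Cauchy--Schwarz on the mixed-gradient term $\int\phx D(\bv)\nabla v\cdot\nabla\zeta^2\,dx$ and on the drift $\int \nabla v\cdot\bv\,\zeta^2\,dx$, the positive ``good'' term $\phx D(\bv)\nabla v\cdot\nabla v \ge c\,\phx(1+|\bv|)|\nabla v|^2$ (the hallmark of the log transformation, using \eqref{ellip}) absorbs everything to produce
\[
\frac{d}{dt}\int \phx v\zeta^2\,dx + c\int \phx(1+|\bv|)|\nabla v|^2\zeta^2\,dx \le C\int(1+|\bv|)|\nabla\zeta|^2\,dx + C\int |\bv|\zeta^2\,dx + \frac{C}{k(\rho)}\int q^I\zeta^2\,dx.
\]
Integrating over $t\in(t_0-\rho^2/2,\tau)$ and taking the supremum in $\tau$, the hypothesis \eqref{vpe3} (together with the Meyers estimate \eqref{haha2}) bounds $\sup_t \int_{B_\rho}|\bv|^2\,dx\le C|B_\rho|$, while the specific normalization built into $k(\rho) = \rho^{2-N/s}\sup_t\|q^I\|_{s,B_\rho}$ makes the $q^I$ term of order $|B_\rho|$. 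The net output is
\[
\sup_{\tau}\int \phx v(x,\tau)\zeta^2\,dx + \int_{Q_\rho(z_0)}|\nabla v|^2\zeta^2\,dx\,dt \le \int \phx v(x,t_0-\rho^2/2)\zeta^2\,dx + C|B_\rho|.
\]

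Next I would control the initial integral using \eqref{po1}. Since $s_1\ge 1$, the inclusion $\{v(\cdot,t_0-\rho^2/2)>0\}\subseteq \{u(\cdot,t_0-\rho^2/2)>M_\rho-\omega_\rho/2\}$ forces $v^+ = 0$ on a set of measure at least $|B_\rho|/2$ at the initial time, and a direct calculation gives the pointwise bound $v^-\le s_1\ln 2$. Combining these with the identity $v^+ = v+v^-$ converts the energy inequality into
\[
\sup_\tau \int_{B_\rho}\phx v^+\zeta^2\,dx\le C|B_\rho|, \qquad \int_{Q_\rho(z_0)}|\nabla v^+|^2\,dx\,dt\le C|B_\rho|,
\]
and dividing by $|B_\rho|$ yields the required bound on $\sup_t \dashint_{B_\rho(x_0)} v^+\,dx$. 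For the $L^2$ bound, I would invoke the measure condition \eqref{po3}, which gives $|\{v^+=0\}\cap B_\rho|\ge |B_\rho|/8$ at every $t$; the Poincar\'e inequality for functions vanishing on a positive-measure fraction then gives slicewise
\[
\int_{B_\rho}(v^+)^2\,dx\le C\rho^2\int_{B_\rho}|\nabla v^+|^2\,dx,
\]
and integration in time and normalization by $|Q_\rho(z_0)|\sim \rho^{N+2}$ yield $\dashint_{Q_\rho(z_0)}(v^+)^2\,dx\,dt\le C$.

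The main obstacle is producing the initial-time bound $\int \phx v(x,t_0-\rho^2/2)\zeta^2\,dx\le C|B_\rho|$ with a constant independent of $\rho$. The pointwise estimate $v^+\le \ln(1+\omega_\rho/k(\rho))$ may diverge as $\rho\to 0$ if $\omega_\rho/k(\rho)\to\infty$, and the crude bound $\int v^+(x,t_0-\rho^2/2)\zeta^2\,dx\le \ln(1+\omega_\rho/k(\rho))\cdot |B_\rho|/2$ is then inadequate. Resolving this likely requires a case distinction based on the dichotomy in Lemma \ref{sob}: in the alternative \eqref{po4}, $\omega_\rho\le \lambda_1\rho^{2-N/s}$ is already comparable to $k(\rho)$ and the bound is immediate, while in the remaining case a more refined level-set or iteration argument exploiting the \emph{uniform-in-time} version of \eqref{po3} is required.
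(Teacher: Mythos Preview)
Your energy estimate, the use of the slicewise Poincar\'e inequality via \eqref{po3}, and the bound $v^-\le s_1\ln 2$ are all correct and are exactly the ingredients the paper uses. The genuine gap is the one you yourself flag: the initial boundary term $\int\Phi(x)\,v(x,t_0-\tfrac12\rho^2)\,\zeta^2\,dx$ cannot be bounded by $C|B_\rho|$ from the information available. Knowing that $v^+=0$ on half the ball at the initial time tells you nothing about the size of $v^+$ on the complementary half, and since $v^+$ can be as large as $\ln\bigl(1+\omega_\rho/k(\rho)\bigr)$, the integral may blow up as $\rho\to 0$. Your proposed resolution via the dichotomy of Lemma~\ref{sob} does not help: the whole discussion of $v$ already sits inside the case \eqref{po3}, and the ``uniform-in-time version of \eqref{po3}'' you invoke is precisely \eqref{po3} itself --- it constrains measures, not magnitudes.

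The missing idea is simply to include a \emph{time} cutoff. Test \eqref{ve1} against $\theta^2(|x-x_0|)f^2(t)$ with $f$ vanishing at $t_0-\tfrac12 r^2$ and $f\equiv 1$ on $[t_0-(1-\delta)^2 r^2,\,t_0+\tfrac12 r^2]$, $|f'|\le c/r^2$. Then no initial term appears; instead the term $\int\Phi\,v\,\theta^2\,\partial_t f^2$ produces a space-time quantity of order $\frac{c}{r^2}\int_{Q_r} v\,\theta^2 f\,dx\,dt$. This is controlled by Young's inequality
\[
\frac{c}{r^2}\int_{Q_r} v\,\theta^2 f\,dx\,dt \;\le\; \frac{\varepsilon}{r^2}\int_{Q_r} v^2\,\theta^2 f^2\,dx\,dt + \frac{c r^N}{\varepsilon},
\]
and the slicewise Poincar\'e inequality you already derived (applied to $v^+$, together with $v^-\le s_1\ln 2$) turns the first term on the right into $c\varepsilon\int_{Q_r}|\nabla v|^2\theta^2 f^2\,dx\,dt + c\varepsilon|B_r|$, which is absorbed on the left for small $\varepsilon$. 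From there your argument closes exactly as you wrote it.
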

\begin{proof}Note from the definition of $v$ that
	$$v(x,t) >0 \Longleftrightarrow u(x,t)>M_r-\frac{\omega_r}{2^{s_1}}.$$
	This together with \eqref{po3} implies
\begin{equation}\label{hope11}
		|\{x\in\br, v(x,t)=0\}|\geq \frac{1}{8}|\br|\ \ \mbox{for each $t\in\itz$}.
\end{equation}
	%	Hence, we can find a number $\delta\in(0,1)$ such that
	Pick a number $\delta\in(0,1)$. 	Let $\theta(\eta)$ be a smooth decreasing function on $[0,r]$ with the properties
	$$\theta(\eta)=1 \ \ \mbox{for $\eta\leq (1-\delta)r$, $\theta(r)=0$, and $|\theta^\prime(\eta)|\leq \frac{c}{r\delta}$.}$$
	We easily see that
	$$\left|\br\setminus B_{(1-\delta)r}(x_0)\right|\leq N\delta|\br|.$$
It is enough for us to take
\begin{equation}\label{hope15}
	 N\delta=\frac{1}{16}. 
\end{equation}
Combining this with  \eqref{hope11} yields
			\begin{equation}
			|\{\theta(|x-x_0|)=1\}\cap\{x\in B_{r}, v(x,t)=0\}|\geq \frac{1}{16}|\br|\ \ \mbox{for each $t\in\itz$}	.	
		\end{equation}
	This puts us in a position to apply Proposition 2.1 in (\cite{D}, p.5). Upon doing so, we obtain
	\begin{equation}\label{hope12}
		\ibr \theta^2(|x-x_0|)v^2dx\leq cr^2\ibr \theta^2(|x-x_0|)|\nabla v|^2dx.
	\end{equation}

Next, we define
	$$f(t)=\left\{\begin{array}{ll}
		1&\mbox{if $t\in (t_0-(1-\delta)^2r^2, t_0+\frac{1}{2}r^2]$,}\\
		\frac{1}{\left[\frac{1}{2}-(1-\delta)^2\right]r^2}(t-t_0+\frac{1}{2}r^2)&\mbox{if $t\in [t_0-\frac{1}{2}r^2,t_0-(1-\delta)^2r^2 ]$,}
	\end{array}\right.$$
where $\delta$ is given as in \eqref{hope15}.
	We use $\theta^2(|x-x_0|)f^2(t)$ as a test function in \eqref{ve1} to derive
	\begin{eqnarray}
	\lefteqn{	\ibr\phx v\theta^2(|x-x_0|)f^2(t)dx+\iqr\phx D(\bv)\nabla v\cdot\nabla v\theta^2(|x-x_0|)f^2(t)dxdt}\nonumber\\
	&\leq&-2\iqr\phx D(\bv)\nabla v\theta(|x-x_0|)\nabla\theta(|x-x_0|)f^2(t)dxdt\nonumber\\
	&&+2\iqr\phx v\theta^2(|x-x_0|)f(t)f^\prime(t)dxdt+\iqr\nabla v\cdot\bv\theta^2(|x-x_0|)f^2(t)dxdt\nonumber\\ &&+\frac{c}{k(r)}\iqr\qi\theta^2(|x-x_0|)f^2(t)dxdt.\nonumber
	\end{eqnarray}
As before, we can derive from \eqref{ddef} and \eqref{ellip} that
	\begin{eqnarray}
	\lefteqn{\sup_{ t_0-\frac{1}{2}r^2\leq t\leq t_0+\frac{1}{2}r^2}	\ibr v\theta^2(|x-x_0|)f^2(t)dx+\iqr(1+|\bv|)|\nabla v|^2\theta^2(|x-x_0|)f^2(t)dxdt}\nonumber\\
	&\leq&\frac{c}{r^2}\iqr(1+|\bv|)f^2(t)dxdt+\frac{c}{r^2}\iqr v\theta^2(|x-x_0|)f(t)dxdt\nonumber\\
	&&+c\iqr|\bv|\theta^2(|x-x_0|)f^2(t)dxdt+\frac{c}{k(r)}\iqr\qi\theta^2(|x-x_0|)f^2(t)dxdt.\label{hope13}
\end{eqnarray}
It follows from \eqref{vpe3} that
\begin{eqnarray}
	\iqr(1+|\bv|)f^2(t)dxdt\leq cr^2\sup_{ t_0-\frac{1}{2}r^2\leq t\leq t_0+\frac{1}{2}r^2}\ibr(1+|\nabla p|)dx\leq cr^{N+2}.\nonumber
\end{eqnarray}
For each $\ve>0$ we deduce from \eqref{hope12} that
\begin{eqnarray}
	\frac{c}{r^2}\iqr v\theta^2(|x-x_0|)f(t)dxdt&\leq &\frac{\ve}{r^2}\iqr v^2\theta^2(|x-x_0|)f^2(t)dxdt+\frac{cr^N}{\ve}\nonumber\\
	&\leq &c\ve\iqr |\nabla v|^2\theta^2(|x-x_0|)f^2(t)dxdt+\frac{cr^N}{\ve}.\nonumber
\end{eqnarray}
The last term in \eqref{hope13} can be estimated as follows
\begin{eqnarray}
	\frac{c}{k(r)}\iqr\qi\theta^2(|x-x_0|)f^2(t)dxdt&\leq&\frac{cr^2}{k(r)}\sup_{ t_0-\frac{1}{2}r^2\leq t\leq t_0+\frac{1}{2}r^2}\|\qi\|_{s,\br}|\br|^{1-\frac{1}{s}}\nonumber\\
	&\leq&cr^N.\nonumber
\end{eqnarray}
Plug the preceding estimates into \eqref{hope13}, choose $\ve$ suitably small in the resulting inequality, and thereby obtain
\begin{equation}
	\sup_{ t_0-\frac{1}{2}r^2\leq t\leq t_0+\frac{1}{2}r^2}	\aibr v\theta^2(|x-x_0|)f^2(t)dx+\frac{1}{r^N}\iqr(1+|\bv|)|\nabla v|^2\theta^2(|x-x_0|)f^2(t)dxdt\leq c.\nonumber
\end{equation}
Apply \eqref{hope12} again to derive
\begin{equation}
\sup_{t_0-(1-\delta)^2r^2\leq t\leq t_0+\frac{1}{2}r^2}	\dashint_{B_{(1-\delta)r}(x_0)} v	dx+\dashint_{Q_{(1-\delta)r}(z_0)} v^2dxdt\leq c.\nonumber
\end{equation}
This implies the desired result.
\end{proof}
We can conclude from \eqref{haha2} and \eqref{vpe3} that
$$\limsup_{r\ra 0}\sup_{ t_0-\frac{1}{2}r^2\leq t\leq t_0+\frac{1}{2}r^2}\left(\aibr|\bv|^sdx\right)^{\frac{1}{s}}\leq c.$$
Combining this with \eqref{hope7} and Lemma \ref{lab} yields
$$\sup_{\qrh}v\leq c.$$
Recall from the definition of $v$ to deduce
\begin{eqnarray}
	\omega_{\frac{r}{2}}\leq \frac{e^c2^{s_1}-1}{e^c2^{s_1}}\omega_r+\frac{e^c-1}{e^c2^{s_1}}k(r).\label{hope14}
\end{eqnarray}
Note that $\omega_r$ is an increasing function of $r$. We can conclude from \eqref{po4}, \eqref{hope14}, and our assumptions on $\qi$ that there exist $\gamma\in(0,1)$ and $c>0$ such that
$$\omega_{\frac{r}{2}}\leq \gamma\omega_r+cr^{2-\frac{N}{s}}\ \ \mbox{for each $r>0$ with $Q_{2r}(z_0)\subset \ot$.}$$
This enables us to invoke Lemma 8.23 in (\cite{GT}, p.201) to get
\begin{equation}\label{hope16}
	\omega_r\leq cr^\alpha \ \ \mbox{for some $c>0, \alpha>0$ and $r$ sufficiently small.}
\end{equation}
We are ready to prove our partial regularity result.
\begin{lem}Let $z_0\in\ot, r>0$ be such that \eqref{hope16} holds. Then $z_0$ is a regular point.
	\end{lem}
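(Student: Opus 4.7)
My plan is to bootstrap the Meyers exponent $s>2$ supplied by Lemma \ref{reh} up to any prescribed $\ell<\infty$ by a sharp function argument on the elliptic pressure equation \eqref{pe1}, exploiting the fact that the H\"older bound \eqref{hope16} together with the continuity of $\mu$ from (H5) and the oscillation hypothesis \eqref{in3} force the spatial modulus
\[
\omega(\rho):=\sup_{t\in(t_0-2\rho^2,t_0+2\rho^2)}\textup{osc}_{B_\rho(x_0)}A(\cdot,t),\qquad A(x,t):=\frac{1}{\mu(u(x,t))}K(x),
\]
to tend to $0$ as $\rho\to 0$; i.e., the diffusion matrix is VMO in $x$ at $x_0$, uniformly in $t$ near $t_0$.

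Fix $\ell>2$ (the range $1<\ell\le 2$ already follows from \eqref{haha2} and energy bounds) and a small $\rho$ to be chosen below. For each $t\in(t_0-2\rho^2,t_0+2\rho^2)$ view $p(\cdot,t)$ as a weak solution of $-\divg(A(\cdot,t)\nabla p)=\qi-\qp$ on $B_\rho(x_0)$. On each ball $\bry\subset B_\rho(x_0)$, freeze the coefficients at $y$ and decompose $p=p_1+p_2$, where $p_1$ solves the frozen problem $-\divg(A(y,t)\nabla p_1)=0$ in $\bry$ with $p_1=p$ on $\partial\bry$, and $p_2=p-p_1\in W_0^{1,2}(\bry)$ solves
\[
-\divg\bigl(A(y,t)\nabla p_2\bigr)=\divg\bigl((A(\cdot,t)-A(y,t))\nabla p\bigr)+(\qi-\qp)\ \mbox{ in $\bry$.}
\]
Classical $C^{1,\alpha}$ regularity for $p_1$ yields a Campanato decay $\dashint_{B_\tau(y)}|\nabla p_1-(\nabla p_1)_{y,\tau}|^2 dx\le c(\tau/R)^{2\alpha}\dashint_{\bry}|\nabla p|^2 dx$ for $\tau\le R$, while testing the $p_2$ equation against itself, exploiting ellipticity of $A(y,t)$ and Poincar\'e, gives
\[
\left(\dashint_{\bry}|\nabla p_2|^{2}dx\right)^{1/2}\le c\,\omega(\rho)\left(\dashint_{\bry}|\nabla p|^{2}dx\right)^{1/2}+cR\left(\dashint_{\bry}|\qi-\qp|^{2}dx\right)^{1/2}.
\]

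Combining the two estimates, balancing the Campanato scale $\tau$ against $R$, and taking the supremum over balls $\bry\ni x$ with $R\le\rho/4$ produces the pointwise sharp function bound
\[
\bigl(\chi_{B_{\rho/2}(x_0)}\nabla p(\cdot,t)\bigr)^{\#}(x)\le c\,\delta(\rho)\bigl[M(|\nabla p|^2)(x)\bigr]^{1/2}+c\bigl[M(|\qi-\qp|^2)(x)\bigr]^{1/2}+cE(\rho),
\]
with $\delta(\rho)\to 0$ as $\rho\to 0$ and $E(\rho):=(\dashint_{B_\rho(x_0)}|\nabla p|^2 dx)^{1/2}$ absorbing the cutoff and large-scale errors. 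Applying the Fefferman--Stein inequality \eqref{fs} with exponent $\ell/2>1$ to $|\chi_{B_{\rho/2}(x_0)}\nabla p|^2$ and then the Hardy--Littlewood inequality \eqref{hl} yields
\[
\|\nabla p(\cdot,t)\|_{\ell,B_{\rho/2}(x_0)}\le c\,\delta(\rho)\|\nabla p(\cdot,t)\|_{\ell,B_\rho(x_0)}+c\|\qi-\qp\|_{\ell,\Omega}+cE(\rho).
\]
Choosing $\rho$ so small that $c\,\delta(\rho)<\tfrac12$ and iterating on a chain of nested balls absorbs the first term. The right-hand side is then bounded uniformly in $t$ by \eqref{qcon} and \eqref{vpe3}, which is precisely \eqref{in2}.

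The main obstacle I anticipate is the pointwise sharp function estimate: balancing the inner Campanato scale $\tau$ against $R$ to turn the $(\tau/R)^\alpha$ decay and the $\omega(\rho)$ perturbation into a single vanishing factor $\delta(\rho)$, and implementing the localization (cutoff/extension) so that the Fefferman--Stein inequality, which is stated on all of $\rn$, applies without destroying this smallness. A secondary delicate point is the uniformity in $t$ of $\omega(\rho)$, which follows because \eqref{hope16} controls $\textup{osc}\,u$ on the full parabolic cylinder $\qr$ and $\mu$ is uniformly continuous on the range $[0,1]$ of $u$.
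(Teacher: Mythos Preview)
Your approach is essentially the paper's: a sharp-function/Fefferman--Stein argument on the pressure equation, with the vanishing coefficient in front of $M(|\nabla p|^2)$ supplied by \eqref{hope16}, (H5), and \eqref{in3}. The paper resolves exactly the localization obstacle you flag by setting $w=p\zeta$ for a cutoff $\zeta\in C^\infty_0(\br)$ (so $w$ satisfies a global equation on $\rn$ and the absorption $\int_{\rn}|\nabla w|^\ell\le\tfrac12\int_{\rn}|\nabla w|^\ell+\cdots$ is clean), and it freezes the coefficients at the single point $(x_0,t_0)$ with matrix $(K)_{x_0,r}$ rather than at each ball center $y$.
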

\begin{proof} Let $z_0\in\ot, r>0$ be given as in the lemma.
	For each $\rho\in (0,r)$ we select a cutoff function $\zeta\in C^\infty_0(\br)$ with the properties
$$\zeta=1 \ \ \mbox{on $B_\rho(x_0)$, $0\leq\zeta\leq 1$ on $\br$, and $|\nabla\zeta|\leq \frac{c}{r-\rho}$.}$$
Set
$$w=p\zeta.$$
We easily verify that $w$ satisfies the equation
\begin{equation}\label{hop1}
	-\divg\left(\epc\nabla w\right)=(\qi-\qp)\zeta-\epc\nabla p\cdot\nabla\zeta-\divg\left(\epc\nabla\zeta p\right)\ \ \mbox{in $\rn$.}
\end{equation}
The time variable $t$ in the equation is assumed to lie in $\itz$. We can say that the above equation holds on $\rn$ because the support of $w$ lies inside $\br$.
% We will keep this in mind 
This fact will be used in some subsequent calculations without acknowledgment. We easily see from \eqref{AA}
that
\begin{equation}\label{AA1}
\|w\|_{\infty,\br}\leq \|p\|_{\infty,\br}\leq c.
\end{equation}% and the distance between 
For each $y\in\rn$ and $R>0$, we consider the boundary value problem
\begin{eqnarray}
	-\divg\left(\frac{1}{\mu(u(x_0,t_0))}(K)_{x_0,r}\nabla\phi\right) &=& 0 \ \ \mbox{in $\bry$},\label{hop2}\\
	\phi&=&w\ \ \mbox{on $\partial\bry$.}
\end{eqnarray}
Here 
$$u(x_0,t_0)=\lim_{r\ra 0}M_r=\lim_{r\ra 0}m_r.$$ Obviously,  the matrix $(K)_{x_0,r}$ satisfies the ellipticity condition \eqref{add5} uniformly for small $r$.  We can appeal to a result in (\cite{G}, p.78) to obtain
\begin{equation}\label{hop6}
	\int_{\bdy}\left|\nabla\phi-\left(\nabla\phi\right)_{y,\delta}\right|^2dx\leq c\left(\frac{\delta}{R}\right)^{N+2}	\int_{\bry}\left|\nabla\phi-\left(\nabla\phi\right)_{y,R}\right|^2dx\ \ \mbox{for $0<\delta\leq R$}.
\end{equation}
Subtract \eqref{hop1} from \eqref{hop2} and use $\phi-w$ as a test function in the resulting equation to derive
\begin{eqnarray}
	\lefteqn{\frac{1}{\mu(u(x_0,t_0))}\int_{\bry}(K)_{x_0,r}\nabla\phi\cdot\nabla(\phi-w)-\int_{\bry}\epc\nabla w\cdot\nabla(\phi-w)dx}\nonumber\\
	&=&-\int_{\bry}(\qi-\qp)\zeta(\phi-w)dx+\int_{\bry}\epc\nabla p\cdot\nabla\zeta(\phi-w)dx\nonumber\\
	&&-\int_{\bry}\epc\nabla\zeta p\cdot\nabla(\phi-w)dx.\label{hop3}
\end{eqnarray}
It follows from the maximum principle and \eqref{AA1} that
$$\|\phi\|_{\infty,\bry}\leq \|w\|_{\infty,\bry}\leq c.$$
Equipped with these, we derive from \eqref{hop3} that
\begin{eqnarray}
	\int_{\bry}|\nabla(\phi-w)|^2dx&\leq&	c\int_{\bry}\left|\left(\frac{1}{\mu(u(x_0,t_0))}(K)_{x_0,r}-\epc\right)\nabla w\right|^2dx\nonumber\\
	&&+c\int_{\bry}|\qi-\qp|\chi_{\br} dx+c\int_{\bry}|\nabla p\cdot\nabla\zeta|dx\nonumber\\
	&&+c\int_{\bry}|\nabla\zeta|^2dx.
	\label{hop7}
\end{eqnarray}%(H2), (H5),\dashint_{\br}\left|(K)_{x_0,r}-K(x)\right|dx
Set
\begin{equation}\label{etaz}
\eta(r)=\sup_{Q_r(z_0)}\left|\mu(u(x_0,t_0))-\mu(u)\right|+\left(\textup{osc}_{\br}K(x)\right)^2.	
\end{equation}
%$$$$
In view of \eqref{hope16}, (H5), and \eqref{in3}, we have
\begin{equation}\label{eta2}
	\lim_{r\ra 0^+}\eta(r)=0.
\end{equation}
Recall that $w=0$ outside $\br$. Keeping this and \eqref{etaz} in mind, we estimate the second integral in \eqref{hop7} to obtain  that
\begin{eqnarray}
	\lefteqn{\int_{\bry}\left|\left(\frac{1}{\mu(u(x_0,t_0))}(K)_{x_0,r}-\epc\right)\nabla w\right|^2dx}\nonumber\\
	&\leq&c\int_{\bry}\left|\left((K)_{x_0,r}-K(x)\right)\nabla w\right|^2dx+c\int_{\bry}\left|\mu(u(x_0,t_0))-\mu(u)\right|\left|\nabla w\right|^2dx\nonumber\\
	%	&\leq&c\left(\int_{\br\cap B_R(y)}|(K)_{x_0,r}-K(x)|^{\frac{2s}{s-2}}dx\right)^{1-\frac{2}{s}}\left(\ibry|\nabla w|^sdx\right)^{\frac{2}{s}}+c\eta(r)\int_{\bry}\left|\nabla w\right|^2dx\nonumber\\
	%	&\leq&cR^N\eta^{\frac{s-2}{s}}(r)	\left[\dashint_{B_{2R}(y)}|\nabla w|^2dx+\left(\dashint_{B_{2R}(y)}\left(|\qi-\qp|\chi_{\br}+|\nabla p\cdot\nabla\zeta|+|\nabla\zeta|^2\right)^{\frac{s}{2}}dx\right)^{\frac{2}{s}}\right]\nonumber\\
		&\leq&c\eta(r)\int_{\bry}\left|\nabla w\right|^2dx.\label{hop5}\nonumber
%\ \ \mbox{for some $\gamma\in (0,1)$.}
\end{eqnarray}
Combining this with \eqref{hop7} yields that
\begin{eqnarray}
	\int_{\bry}|\nabla(\phi-w)|^2dx&\leq&c\eta(r)\int_{\bry}\left|\nabla w\right|^2dx+cR^NE(y),\nonumber
\end{eqnarray}
where 
$$E(y)=M((\qi-\qp)\chi_{\br})(y)+M(\nabla p\cdot\nabla\zeta)(y)+M(|\nabla\xi|^2)(y).$$
It is easy to verify
$$\ibry|f-(f)_{y,R}|^2dx\leq \ibry|f-c|^2dx\ \ \mbox{for each number $c\in \mathbb{R}$ and $f\in L^1_{\textup{loc}}(\mathbb{R}^N)$}.$$
Consequently,
\begin{eqnarray}
	\int_{\bdy}\left|\nabla (w-\phi)-\left(\nabla (w-\phi)\right)_{y,\delta}\right|^2dx&\leq&\int_{\bdy}\left|\nabla (w-\phi)-\left(\nabla (w-\phi)\right)_{y,R}\right|^2dx\nonumber\\
	&\leq&\int_{\bry}\left|\nabla (w-\phi)-\left(\nabla (w-\phi)\right)_{y,R}\right|^2dx.\nonumber
\end{eqnarray}
With this in mind, we derive from \eqref{hop6} that
\begin{eqnarray}
	\lefteqn{\int_{\bdy}\left|\nabla w-\left(\nabla w\right)_{y,\delta}\right|^2dx}\nonumber\\
	&\leq&2\int_{\bdy}\left|\nabla \phi-\left(\nabla \phi\right)_{y,\delta}\right|^2dx+2\int_{\bdy}\left|\nabla (w-\phi)-\left(\nabla (w-\phi)\right)_{y,\delta}\right|^2dx\nonumber\\
	&\leq&c\left(\frac{\delta}{R}\right)^{N+2}	\int_{\bry}\left|\nabla\phi-\left(\nabla\phi\right)_{y,R}\right|^2dx+c\int_{\bry}\left|\nabla (w-\phi)\right|^2dx\nonumber\\
	&\leq&c\left(\frac{\delta}{R}\right)^{N+2}\int_{\bry}\left|\nabla w-\left(\nabla w\right)_{y,R}\right|^2dx+c\int_{\bry}\left|\nabla (w-\phi)\right|^2dx\nonumber\\
		&\leq&c\left(\left(\frac{\delta}{R}\right)^{N+2}+\eta(r)\right)\int_{\bry}\left|\nabla w\right|^2dx+cR^NE(y).\label{hop8}
\end{eqnarray}
Let $\tau\in (0,1)$ to be determined. Take $\delta=\tau R$ in \eqref{hop8} to get
\begin{eqnarray}
	\dashint_{B_{\tau R}(y)}\left|\nabla w-\left(\nabla w\right)_{y,\tau R}\right|^2dx\leq c\left(\tau^2+\frac{\eta(r)}{\tau^N}\right)M(|\nabla w|^2)(y)+\frac{c}{\tau^N}E(y).\nonumber
\end{eqnarray}
Since this is true for each $R>0$, we obtain
\begin{equation}\label{hop9}
	\left(\left(\nabla w\right)^\#\right)^2\leq c\left(\tau^2+\frac{\eta(r)}{\tau^N}\right)M(|\nabla w|^2)+\frac{c}{\tau^N}E.
\end{equation}
In view of the construction of our weak solution in Section 2, we may assume 
$$|\nabla p|\in L^\ell(\Omega)\ \  \mbox{for each $\ell>1$.}$$
Now fix $\ell>2$.  We derive from the Fefferman-Stein inequality \eqref{fs}, \eqref{hop9}, and the Hardy-Littlewood maximal theorem \eqref{hl} that
\begin{eqnarray}
	\int_{\rn}|\nabla w|^\ell dx%&\leq&\int_{\rn}M^\ell(|\nabla w|)dx\nonumber\\
	&\leq&c\int_{\rn}|\left(\nabla w\right)^\#|^\ell dx\nonumber\\
	&\leq&	c\left(\tau^2+\frac{\eta(r)}{\tau^N}\right)^{\frac{\ell}{2}}\int_{\rn}M^{\frac{\ell}{2}}(|\nabla w|^2)dx+\frac{c}{\tau^{\frac{N\ell}{2}}}\int_{\rn}E^{\frac{\ell}{2}}dx\nonumber\\
		&\leq&	c\left(\tau^2+\frac{\eta(r)}{\tau^N}\right)^{\frac{\ell}{2}}	\int_{\rn}|\nabla w|^\ell dx+\frac{c}{\tau^{\frac{N\ell}{2}}}\int_{\rn}|(\qi-\qp)\chi_{\br}|^{\frac{\ell}{2}}dx\nonumber\\
		&&+\frac{c}{\tau^{\frac{N\ell}{2}}}\int_{\rn}|\nabla p\cdot\nabla\zeta|^{\frac{\ell}{2}}dx+\frac{c}{\tau^{\frac{N\ell}{2}}}\int_{\rn}|\nabla\zeta|^{\ell}dx.\label{hop10}
\end{eqnarray}
We can choose $\tau\in (0,1), r\in(0,1)$ such that the coefficient of the first integral on the right-hand side satisfies
$$	c\left(\tau^2+\frac{\eta(r)}{\tau^N}\right)^{\frac{\ell}{2}}\leq\frac{1}{2}.$$
This is possible because
$$\lim_{\tau\ra 0^+}\lim_{r\ra 0^+}\left(\tau^2+\frac{\eta(r)}{\tau^N}\right)^{\frac{\ell}{2}}=0\ \ \mbox{due to \eqref{eta2}}.$$
%One way of doing this is to take
%$$\tau=r^{\frac{\gamma}{N}}$$
%Then we easily conclude that it is enough for us to pick 
%$$r\leq \frac{1}{(2c)^{\frac{N}{\ell\gamma}}}.$$r^{\frac{\ell\gamma}{2}}
Let $\tau, r$ so chosen. We can deduce from \eqref{hop10} that
\begin{eqnarray}
	\int_{B_{\rho}(x_0)}|\nabla p|^\ell dx&\leq& \frac{c}{(r-\rho)^{\frac{\ell}{2}}}\ibr|\nabla p|^{\frac{\ell}{2}}dx+\frac{cr^N}{(r-\rho)^\ell}+c\ibr|\qi-\qp|^{\frac{\ell}{2}}dx\nonumber\\
	&\leq&\ve\ibr|\nabla p|^{\ell}dx+\frac{cr^N}{\ve(r-\rho)^\ell}+c\ibr|\qi-\qp|^{\frac{\ell}{2}}dx,\ \ \ve>0.\nonumber
\end{eqnarray}
By the proof of \eqref{hope8}, we obtain
\begin{eqnarray}
		\int_{B_{\frac{r}{2}}(x_0)}|\nabla p|^\ell dx\leq \frac{c}{r^{-N+\ell}}+c\ibr|\qi-\qp|^{\frac{\ell}{2}}dx.\nonumber
\end{eqnarray}
Since this is true for a.e. $t\in\itz$, we arrive at
\begin{equation}%\label{pwl}
\sup_{\itz}\int_{B_{\frac{r}{2}}(x_0)}|\nabla p|^\ell dx\leq \frac{c}{r^{-N+\ell}}+c\sup_{ t_0-\frac{1}{2}r^2\leq t\leq t_0+\frac{1}{2}r^2}\ibr|\qi-\qp|^{\frac{\ell}{2}}dx.\nonumber
\end{equation}
The proof is complete.
\end{proof}

\end{document}